\documentclass{amsart}

\usepackage{color}
\usepackage{hyperref}
\hypersetup{
    colorlinks=true,
    linkcolor=blue,
    filecolor=blue,      
    urlcolor=blue,
    citecolor=cyan,
}
\usepackage{amsthm}
\usepackage{extarrows}
\usepackage{amssymb}
\usepackage{mathrsfs}
\usepackage{amsmath}
\usepackage{graphicx}

\newtheorem{thm}{Theorem}
\newtheorem{lemm}[thm]{Lemma}
\newtheorem{prop}[thm]{Proposition}
\newtheorem{cor}[thm]{Corollary}

\theoremstyle{definition}

\newcommand{\dist}{\operatorname{dist}}
\newcommand{\norm}[1]{\|#1\|}

\title{The Hausdorff dimension of the spectrum of the Thue-Morse Hamiltonian}
\author{Liu Qinghui}
\address{School of Mathematics and Statistics, Beijing Institute of Technology, Beijing, China}
\email{qhliu@bit.edu.cn}
\thanks{Supported by National Natural Science Foundation of China (12571095).}

\author{Lv Yiran}
\address{School of Mathematics and Statistics, Beijing Institute of Technology, Beijing, China}
\email{3120256207@bit.edu.cn}
\date{}

\begin{document}

\begin{abstract}
In 2015, Liu, Qu (Commun. Math. Phys., Vol. 338) show that the Hausdorff dimension of the spectrum of the Thue–Morse Hamiltonian admits a positive lower bound independent of the coupling constant $\lambda$:
$$\dim_H \sigma(H_{\mathrm{tm},\lambda})\ge\frac{\log 2}{140\log 2.1}.$$
In this paper, we prove that the spectrum of the Thue--Morse Hamiltonian has full Hausdorff dimension for any $\lambda\in\mathbb{R}$:
\[
\dim_H \sigma(H_{\mathrm{tm},\lambda})=1.
\]
\end{abstract}
\subjclass[2020]{Primary 28A78; Secondary 81Q10}
\maketitle

\section{Introduction}

\subsection{Background and Main Result}

Discrete Schr\"odinger operators with potentials generated by
substitutions have been studied extensively since the 1980s.
Transfer matrices and the associated trace maps provide an important
approach to their spectral analysis. 
In particular, the trace-map method, 
together with other approaches that can attain the same objective, 
has been used to study the zero-measure property of 
the spectrum for broad classes of substitutional potentials.; 
see, for example, \cite{BG93,LTWW02, Lenz02, DL06}. 
For such operators, once the spectrum is known to
have zero Lebesgue measure, a natural problem is to investigate its
finer geometric structure. The Hausdorff and box dimensions of the
spectrum have therefore become important objects in the spectral theory
of substitutional Hamiltonians. Among the most studied models in this
direction are the Fibonacci, Thue--Morse and period-doubling
Hamiltonians.

The Fibonacci Hamiltonian is the best understood among these models.
Its spectrum is a Cantor set of zero Lebesgue measure for every nonzero
coupling \cite{Suto89}, and its spectral structure is closely related
to the dynamics of the Fibonacci trace map. The hyperbolic structure of
the trace dynamics has made it possible to obtain rather detailed
information on the geometry of the spectrum; see, for example,
\cite{Cas86,DG09}. In particular, the dependence of the Hausdorff
dimension on the coupling constant is well understood in both the weak-
and strong-coupling regimes. Damanik and Gorodetski proved that\cite{DG11}
\[
    \lim_{\lambda\to0}
    \dim_H \sigma(H_{\mathrm{Fib},\lambda})
    =1,
\]
whereas Damanik, Embree, Gorodetski and Tcheremchantsev proved that\cite{DEGT08}
\[
    \lim_{\lambda\to\infty}
    (\log\lambda)
    \dim_H \sigma(H_{\mathrm{Fib},\lambda})
    =
    \log(1+\sqrt{2}).
\]
Thus, in particular,
\[
    \dim_H \sigma(H_{\mathrm{Fib},\lambda})
    \rightarrow 0,
    \quad\text{as }\lambda\to\infty.
\]
More generally, the fractal dimensions of spectra for Sturmian
Hamiltonians have been studied in \cite{LW04,LPW07,LQW14}.

A rather different phenomenon was discovered for the Thue--Morse
Hamiltonian. In \cite{LQ15}, Liu and Qu proved that, for every
\(\lambda\ne0\),
\[
    \dim_H \sigma(H_{\mathrm{tm},\lambda})
    \geq
    \frac{\log 2}{140\log 2.1}.
\]
The remarkable point here is that the lower bound is independent of the
coupling constant. Hence, in contrast with the Fibonacci Hamiltonian,
the Hausdorff dimension of the Thue--Morse spectrum remains uniformly
bounded away from zero as \(\lambda\to\infty\). The proof in
\cite{LQ15} was based on the dynamics of the Thue--Morse trace
polynomials. A crucial observation was that, after suitable rescaling
near certain regular germs, the iterated trace polynomials become
increasingly close to the model family
\[
    \{2\cos(2^n x)\}_{n\geq1}.
\]
This allowed the construction of a Cantor subset of the spectrum with a
Hausdorff dimension bounded from below uniformly in \(\lambda\).
Thus the coupling-independent estimate in \cite{LQ15} already suggested
that the Thue--Morse Hamiltonian possesses a dimensional mechanism quite
different from that of the Fibonacci model.

The study of spectral dimensions for substitutional Hamiltonians was
subsequently continued for other substitutions. For the period-doubling
Hamiltonian, Liu, Qu and Yao constructed an intrinsic coding of the
spectrum and a separating nested structure, and obtained
\[
    \dim_H \sigma(H_{\mathrm{pd},\lambda})
    >
    \frac{\log\frac{1+\sqrt{5}}{2}}{\log 4},
    \qquad \lambda>0,
\]
see \cite{LQY22}. This lower bound is again independent of \(\lambda\).
More recently, Liu and Tang studied a class of generalized Thue--Morse
Hamiltonians and obtained coupling-independent positive lower bounds
for their spectral Hausdorff dimensions \cite{LT23}. The constructions
in these works are different: the Thue--Morse argument in \cite{LQ15}
is based on regular germs, whereas the period-doubling and generalized
Thue--Morse arguments use suitable nested structures of periodic
approximating spectra. Nevertheless, all of them rely essentially on
the detailed dynamics of the corresponding trace maps.

We now return to the Thue--Morse Hamiltonian, which is the object of
the present paper. 
Let $\tau:\{a,b\}^{*}\longrightarrow\{a,b\}^{*}$
be the Thue--Morse substitution defined by
\[
    \tau(a)=ab,
    \qquad
    \tau(b)=ba.
\]
Starting from the letter \(a\), successive iterations give
\[
    \tau(a)=ab,\qquad
    \tau^{2}(a)=abba,\qquad
    \tau^{3}(a)=abbabaab,\quad\ldots
\]
Since \(\tau^{2}(a)\) begins and ends with \(a\), the pointed words
$ \tau^{2k}(a)\boldsymbol{\cdot}\tau^{2k}(a)$
are compatible on increasingly large neighborhoods of the origin.
Here the dot separates the coordinates \(-1\) and \(0\). Their limit
defines a two-sided Thue--Morse sequence
\[
    w_{\mathrm{tm}}
    =
    \lim_{k\to\infty}
    \tau^{2k}(a)\boldsymbol{\cdot}\tau^{2k}(a)
    \in\{a,b\}^{\mathbb Z}.
\]

Define, for $n\in\mathbb{Z}$,           
\[
    v_{\lambda}(n)
    =
    \begin{cases}
        \lambda, & w_{\mathrm{tm}}(n)=a,\\
        -\lambda, & w_{\mathrm{tm}}(n)=b.
    \end{cases}
\]
The corresponding Thue--Morse Hamiltonian
\(H_{\mathrm{tm},\lambda}\) acts on \(\ell^{2}(\mathbb Z)\) by
\[
    (H_{\mathrm{tm},\lambda}\psi)(n)
    =
    \psi(n+1)+\psi(n-1)+v_{\lambda}(n)\psi(n).
\]

We recall the following facts from \cite{LQ15}.

For \(n\geq1\), let \(x_n(E)\) denote the trace polynomial associated
with the word \(\tau^n(a)\) and 
\[
    \sigma_n
    :=
    \{E\in\mathbb R:|x_n(E)|\leq2\}.
\]
The Thue--Morse trace polynomials satisfy
the recursion
\begin{equation}
    x_{n+1}(E)
    =
    x_{n-1}^{2}(E)\bigl(x_n(E)-2\bigr)+2,
    \qquad n\geq2,
    \label{eq:TM-trace-recursion}
\end{equation}
with $x_1(t)=t^2-\lambda^2-2$ and $x_2(t)=(t^2-\lambda^2)^2-4t^2+2$,
where $\lambda$ is the coupling.
Since $\deg x_n(t)=2^n$, by Floquet theory, $\sigma_n$ consists of $2^n$ intervals,
which are often called bands of level $n$.
On each of these bands, $x_n(t)$ is strictly monotone and maps the band onto $[-2,2]$.
Two neighbor bands may intersect at one endpoint.
For $n\ge1$, 
$$\sigma_n\cup\sigma_{n+1} \subset  \sigma_{n-1}\cup\sigma_n,$$ 
and
\begin{equation}
    \sigma(H_{\mathrm{tm},\lambda})
    =
    \bigcap_{n\geq1}
    \bigl(\sigma_n\cup\sigma_{n+1}\bigr)=\limsup_{n\to\infty}\sigma_n.
    \label{eq:TM-periodic-approximation}
\end{equation}

Thus the spectral analysis is naturally connected with the trace dynamics generated by \eqref{eq:TM-trace-recursion}. 
In contrast to the Fibonacci case,
the Thue--Morse trace dynamics also has a rather different dynamical
structure from the hyperbolic trace-map picture arising in the
Fibonacci model. 
The trace recursion is equivalently written as the two-dimensional polynomial map (see \cite{AP89,B89})
\[
    \Phi(x,y)
    =
    (1-y,\,4x^2y),
\]
which is everywhere non-conformal. 
Nor is $\Phi$ hyperbolic. It possesses a center manifold at certain non-hyperbolic points.
This center-type structure, together with the non-conformality of the
trace map, is one of the principal dynamical features of the
Thue--Morse problem.

On the other hand, the study in \cite{LQ15,LQY17} is  heavily
based on dynamics near non-hyperbolic points;
and our study in this paper is based on dynamics on the corresponding center manifold,
which is an expanding Chebyshev dynamics.

We are now ready to state the main result of this paper.

\begin{thm}\label{thm:main}
For every real \(\lambda\), the spectrum of the Thue--Morse Hamiltonian
has full Hausdorff dimension:
\[
    \dim_H \sigma(H_{\mathrm{tm},\lambda})=1.
\]
\end{thm}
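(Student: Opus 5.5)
We plan to prove the lower bound $\dim_H\sigma(H_{\mathrm{tm},\lambda})\ge 1$; the upper bound is trivial. The case $\lambda=0$ is immediate, since the spectrum is $[-2,2]$. For $\lambda\neq0$ we work with the trace map $\Phi(x,y)=(1-y,4x^2y)$, written in the variables $x=x_{n-1}/2$-type quantities coming from the recursion \eqref{eq:TM-trace-recursion}. The key invariant object is the set where the second coordinate stays near $4$ in a suitable normalization, namely where $x_{n-1}^2(x_n-2)$ behaves like $x_n^2-4$.

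Concretely, if $x_{n-1}(E)$ is close to $\pm2$ along an orbit, then \eqref{eq:TM-trace-recursion} reduces approximately to $x_{n+1}\approx 4(x_n-2)+2=4x_n-6$. This is the wrong model, so the relevant object is rather the center manifold through the non-hyperbolic fixed point. On that manifold the dynamics is conjugate to the Chebyshev map $T(x)=x^2-2$, i.e.\ $2\cos\theta\mapsto 2\cos 2\theta$, in accordance with the regular-germ observation of \cite{LQ15} that the rescaled $x_n$ approach $2\cos(2^n x)$.

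The plan has four steps.

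\textbf{Step 1.} Identify the center manifold $W^c$ of $\Phi$ at the non-hyperbolic point. Show that $\Phi|_{W^c}$ is smoothly conjugate to the doubling map on the circle, which is an expanding Chebyshev dynamics with entropy $\log 2$ and Lyapunov exponent $\log 2$.

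\textbf{Step 2.} Show that for each $\lambda\ne0$ there are energies $E_0$, in fact a dense set of regular germs from \cite{LQ15}, whose orbit $(x_{n-1}(E_0),x_n(E_0))$ falls, after finitely many steps $n_0$, into a small neighbourhood of the point on $W^c$. Moreover, the curve $E\mapsto (x_{n_0-1}(E),x_{n_0}(E))$ should cross the stable direction transversally near $E_0$.

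\textbf{Step 3.} Follow the strip of energies near $E_0$ under iteration. Points whose orbits stay within distance $\varepsilon$ of $W^c$ for $k$ steps form a family of intervals. This family is combinatorially described by the $2^k$ branches of $T^k$ and lies inside $\sigma_{n_0+k}\cup\sigma_{n_0+k+1}$. Using the hyperbolic splitting in the transverse direction together with bounded distortion along $W^c$, the derivative of $x_{n_0+k}$ on each such interval is $\asymp 2^k\cdot\rho^k$, where $\rho=\rho(\varepsilon)$ accounts for the transverse contraction or expansion. One then has to show that the total length lost per step is a factor $1-O(\varepsilon)$. This yields a Cantor subset $K_\varepsilon\subset\sigma(H_{\mathrm{tm},\lambda})$, via \eqref{eq:TM-periodic-approximation}, built from $\ge 2^{k(1-\delta(\varepsilon))}$ intervals of length $\le 2^{-k(1-\delta(\varepsilon))}$ with gaps comparable to the lengths.

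\textbf{Step 4.} Apply the mass distribution principle, or a standard Cantor-set lemma, to get $\dim_H K_\varepsilon\ge 1-\delta'(\varepsilon)$, and let $\varepsilon\to0$.

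The main obstacle is Step 3. The non-conformality of $\Phi$ and the merely center (not hyperbolic) nature of the neutral direction threaten the distortion control: the ratio of the contraction rate of the strip to the expansion along $W^c$ must tend to the Chebyshev ratio $2:2$ uniformly as $\varepsilon\to0$. This would give dimension close to $1$, rather than only the bound $\log2/(140\log 2.1)$ obtained with the crude estimates of \cite{LQ15}. We would first try to straighten $W^c$ by a $C^1$ change of coordinates. In those coordinates we would show that the transverse coordinate evolves by a map whose effect on the energy parametrization is a bounded multiplicative factor per step that tends to $1$ with $\varepsilon$, controlling it by a telescoping product estimate.
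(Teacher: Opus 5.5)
Your Steps 1, 2 and 4 point in a reasonable direction. The curve $\{x_n=x_{n-1}^2-2\}$, i.e.\ $y_n:=x_n-x_{n-1}^2+2=0$, is invariant and carries exactly the Chebyshev dynamics, and bands on which $|y_n|$ is tiny do exist. Step 3, however, relies on a transverse hyperbolicity that is not there, and that is where the whole difficulty lies.

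In the variables $(x_n,y_n)$ the recursion reads $x_{n+1}=x_n^2-2+y_{n+1}$, $y_{n+1}=(2-x_n)y_n$. So the deviation from $W^c$ evolves linearly with multiplier $2-x_n$. On $W^c$ this multiplier is $<1$ only for $x>1$ and reaches $4$ at $x=-2$. Along the Chebyshev orbit of $2\cos\pi t$, the $k$-step multiplier is essentially $e^{S_k\varphi(t)}$, with $\varphi(t)=\log(4\sin^2\pi t)$ and $S_k$ the Birkhoff sum for the doubling map. Since $\int_0^1\varphi\,dt=0$, the transverse Lyapunov exponent is zero, and the deviation is multiplied by factors of size $e^{\pm\sigma\sqrt k}$.

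Consequently, a proportion of branches bounded away from zero leaves the $\varepsilon$-neighbourhood within $k$ steps, unless the initial deviation was below $\varepsilon e^{-\sqrt k}$. Because the normal dynamics is linear in $y$, this proportion does not improve as $\varepsilon\to0$. So the survivors are not ``the $2^k$ branches of $T^k$'', there is no rate $\rho(\varepsilon)$, and the loss is not a factor $1-O(\varepsilon)$ per step. Straightening $W^c$ or controlling distortion cannot fix this: the product you hope telescopes to something near $1$ is exactly $\prod_j 4\sin^2(2^{j-1}\theta)$, which is not near $1$.

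The missing idea is a statistical selection of branches over long blocks. The paper fixes a block length $K$ and uses a central limit theorem for the unbounded, zero-mean, non-coboundary observable $\varphi$ (Fernando--Schindler), plus an oscillation estimate on dyadic cylinders. This shows that a fixed positive proportion (more than a fifth, for large $K$) of the $2^K$ dyadic intervals of level $K$ satisfy $S_K\varphi\le-\log 80$ \emph{on the whole interval}. On the corresponding children of a parent with small deviation, the deviation is divided by $10$, so the construction can be restarted forever. Only a constant fraction of children is lost per block, and full dimension comes from $\frac{(K-4)\log 2}{K\log 2-\log C}\to1$ as $K\to\infty$, not from $\varepsilon\to0$.

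Note also that a lower bound on dimension needs the children's lengths bounded \emph{below}, $|J|\ge C|I|2^{-K}$ with $C$ independent of $K$, whereas your Step 3 asserts an upper bound. The paper gets the lower bound from $|(f^K)'(2\cos\theta)|=2^K|\sin 2^K\theta|/\sin\theta\le 100\cdot 2^K$ on the middle part of the parent, together with Last's band-length estimate and an inflection-point argument.
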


Theorem~\ref{thm:main} completely determines the Hausdorff dimension of
the Thue--Morse spectrum for all positive couplings. Recall that the
spectrum has zero Lebesgue measure. Thus the Thue--Morse spectrum is a
zero-measure spectral set whose Hausdorff dimension is nevertheless
maximal for every nonzero coupling \(\lambda\).

\paragraph{\textbf{Remark.}}
Theorem~\ref{thm:main} strengthens the coupling-independent phenomenon
discovered in \cite{LQ15}. There it was proved that
\[
    \dim_H \sigma(H_{\mathrm{tm},\lambda})
    \geq
    \frac{\log 2}{140\log 2.1},
\]
with a lower bound independent of \(\lambda\).
Theorem~\ref{thm:main} shows that this phenomenon is in fact maximal:
the Hausdorff dimension is not only uniformly bounded away from zero,
but is identically equal to \(1\) for every real coupling.

Since for $\lambda=0$, $\sigma(H_{\mathrm{tm},\lambda})=[-2,2]$, we only discuss the case of $\lambda\ne0$.

\subsection{Sketch of the main ideas}

We explain the main ideas of the proof. The starting point of our
argument is a new reformulation of the Thue--Morse trace recursion.
Define, for $n\ge2$,
\[
    y_n:=x_n-x_{n-1}^2+2,
\]
and then \eqref{eq:TM-trace-recursion} is equivalently written as
\begin{equation}
    \begin{cases}
        x_{n+1}=x_n^2-2+y_{n+1},\\[1mm]
        y_{n+1}=(2-x_n)y_n.
    \end{cases}
    \label{eq:TM-new-recursion}
\end{equation}

This simple reformulation isolates the underlying Chebyshev dynamics from the transverse deviation, allowing the two components to be analyzed separately in the subsequent proof.
When \(y_n=0\), the trace dynamics reduces exactly to the Chebyshev map
\[
    f(x)=x^2-2,
\]
for which
\[
    f(2\cos\theta)=2\cos2\theta.
\]
Through the angular parametrization \(x=2\cos\theta\), this dynamics is
semiconjugate to the doubling dynamics. 
For $\lambda=0$, the transverse deviation vanishes identically.
Thus, as long as the deviation
\(y_n\) is sufficiently small, a finite piece of the true trace orbit
can be regarded as a perturbation of the Chebyshev dynamics. 
The main problem is therefore to identify intervals on which 
the transverse deviation remains small after a long block of iterations.
By \eqref{eq:TM-new-recursion}, the transverse multiplier over \(n\)
steps is
\[
    \prod_{j=0}^{n-1}
    \bigl(2-f^j(2\cos\theta)\bigr).
\]
Since $f^j(2\cos\theta)=2\cos(2^j\theta)$, 
we have $2-f^j(2\cos\theta) =  4\sin^2(2^{j-1}\theta).$
Hence the logarithm of the transverse multiplier can be represented as
a Birkhoff sum for the doubling map. More precisely, let
\[
    T(x)=2x\pmod1,
    \qquad
    \varphi(x)=\log\bigl(4\sin^2\pi x\bigr),
\]
and write
\[
    S_n\varphi(x)
    =
    \sum_{j=0}^{n-1}\varphi(T^j x).
\]
After the corresponding angular normalization, the transverse growth
along the ideal orbit is governed by \(S_n\varphi\). Notice that
\[
    \int_0^1\varphi(x)\,dx=0.
\]
Thus there is no exponential contraction or expansion in the
transverse direction at the level of the Lyapunov exponent. The size
of the deviation is instead determined by the fluctuations of the
Birkhoff sums.

This is where a probabilistic ingredient enters the proof. A limit
theorem of Fernando and Schindler \cite{FS26} applies to the unbounded
observable \(\varphi\) for the doubling map and yields a central limit
theorem for \(S_n\varphi\). Combining this limit theorem with a uniform
oscillation estimate on dyadic cylinders, we show that, for all
sufficiently large \(K\), a positive proportion of the \(2^K\) dyadic
intervals of level \(K\) satisfy a uniform negative estimate for the
Birkhoff sum. More precisely, letting
\[
    I_{K,j}
    =
    \left[
        \frac{j}{2^K},
        \frac{j+1}{2^K}
    \right),
    \qquad
    0\leq j<2^K,
\]
one can select a family of such intervals that
\[
    S_K\varphi(t)\leq-\log20
\]
throughout every selected interval. On these intervals the transverse
multiplier is therefore uniformly smaller than a fixed constant less
than one.

The next step is to transfer this statistical statement from the ideal
Chebyshev dynamics to the true Thue--Morse trace dynamics. 
We start with a sufficiently deep spectral band \(I\) on which the deviation is very small. 
For the fixed large \(K\) above, 
by a Floquet argument,  
$I\cap\sigma_{n+1}\cap ... \cap \sigma_{n+K}$ is composed of $2^K$ child intervals.
Moreover, finite-time perturbation estimates
show that the true trace orbit shadows the ideal Chebyshev orbit during
the next \(K\) steps. 
These \(2^K\) child intervals correspond precisely to the  \(2^K\) 
branches of the ideal dynamics.

Among these child intervals, a fixed positive proportion can be retained
so that two properties hold simultaneously. First, the deviation at
the end of the \(K\)-step block is again sufficiently small. Second,
every retained child \(J\) satisfies a uniform lower bound
\[
    \frac{|J|}{|I|}
    \geq c\,2^{-K},
\]
where \(c>0\) does not depend on \(K\) or on the generation of the
construction. Thus the statistical abundance of good orbits for the
doubling map is converted into an abundance of sufficiently large
spectral descendants.

The crucial point is that the same configuration is recovered on every retained child. 
The deviation is again sufficiently small to repeat the same \(K\)-step construction. 
Hence, for every sufficiently large \(K\), we obtain a nested Cantor set
\[
    \Lambda_K
    \subset
    \sigma(H_{\mathrm{tm},\lambda})
\]
such that every parent interval has at least
\[
    N_K\geq c_0\,2^K
\]
retained children, while every retained child \(J\) satisfies
\[
    \frac{|J|}{|I|}
    \geq c_1\,2^{-K},
\]
where \(c_0,c_1>0\) are independent of \(K\) and of the generation.

The standard dimension estimate for such a nested construction gives
\[
    \dim_H\Lambda_K
    \geq
    \frac{\log N_K}
         {K\log2-\log c_1}\ge \frac{K\log2+\log c_0}{K\log2-\log c_1}.
\]
Letting $K\to\infty$, since $\Lambda_K\subset\sigma(H_{\mathrm{tm},\lambda})$,
Theorem~\ref{thm:main} follows.

\subsection{Some remarks}
Let us compare the present argument with the earlier
construction in \cite{LQ15}. The regular-germ method developed there
already identified the local cosine model and produced a Cantor subset
whose Hausdorff dimension is bounded uniformly away from zero for all
positive couplings. 
In contrast to previous studies, which were confined to neighborhoods of individual points, 
the new recursion \eqref{eq:TM-new-recursion} in the present work 
allows us to isolate the Chebyshev dynamics from the transverse deviation on an entire interval.
The fluctuations of
this deviation are then described by Birkhoff sums for the doubling
map. The resulting statistical information allows us to retain a fixed
positive proportion of all the \(2^K\) available children at each
 step. Thus the loss in the logarithm of the
branching number is only \(O(1)\), whereas both the branching and the
geometric contraction have leading order \(K\log2\). This almost
lossless branching mechanism is what upgrades the uniform positive
lower bound of \cite{LQ15} to full Hausdorff dimension.

The research in this paper indicates that, for dynamical system \eqref{eq:TM-new-recursion}, 
when $\lambda$ is large--that is, when the global perturbation is large--the regions 
of infinitesimal perturbation exist extensively. 
This is somewhat analogous to turbulence.

We have also studied Schr\"odinger operators with other substitution sequences as potentials. 
We find that for the generalized Thue–Morse potential(\cite{LT23}), 
the spectral dimension can be shown to be equal to $1$ 
by methods similar to those in this paper. 
For the period-doubling potential, the situation is somewhat different; 
however, we still believe that with some suitable adjustments, 
the conclusion of spectral dimension equal to $1$ can be obtained.

\subsection{Structure of the paper}
The rest of this paper is organized as follows. In Sect.~2, starting from a band \(I_0\) with small deviation, we obtain a class of cut-off intervals \(\mathcal{I}_{0,K}^{(1)}(I_0)\). In Sect.~3, we select from \(\mathcal{I}_{0,K}^{(1)}(I_0)\) those cut-off intervals with small deviation and collect them into \(\mathcal{I}_{0,K}^{(2)}(I_0)\). In Sect.~4, we choose from \(\mathcal{I}_{0,K}^{(2)}(I_0)\) those cut-off intervals with sufficiently large length and form \(\mathcal{I}_{0,K}^{(3)}(I_0)\). In Sect.~5, we construct a separated nested structure and estimate the Hausdorff dimension.

\section{From bands to cut-off intervals}

Setting $f(x)=x^2-2$, $g(x)=2-x$, we have
\begin{equation}\label{itx-xy}
 x_{n+1}-f(x_n)=g(x_n)(x_{n}-f(x_{n-1}))=y_{n+1},
\end{equation}
and hence \eqref{eq:TM-new-recursion} can be written as, for  $n\ge2$,
\begin{equation}\label{itxy-xy}
\begin{cases}
x_{n+1}=f(x_n)+y_{n+1},\\
y_{n+1}=g(x_n)y_n.
\end{cases}
\end{equation}

We restate some of the properties from \cite{LQ15} for convenience, and include a short proof for completeness. 

\begin{prop}\label{basic}
Take $t\in\mathbb{R}$.

(i) If $x_n(t)=2$ for some $n\ge2$, then for all $m\ge n$, $x_m(t)=2$.

(ii) If $x_n(t)=0$ for some $n\ge1$, then $x_{n+2}(t)=2$. 
Hence, for all $m\ge n+2$, $x_m(t)=2$.
Moreover, if $x_n(t)\ne2$, then for any $2\le k\le n$, $x_k(t)\ne2$,
and for any $1\le k\le n-2$, $x_k(t)\ne0$.

(iii) If $x_n(t)\le2$, then for any $k\ge1$, $x_{n+k}(t)\le2$.

(iv) $x_2(t)-f(x_1(t))=-4\lambda^2< 0$.

(v) If $n\ge4$ is even, $x_{n-1}(t)\ne2$, then $x_n(t)-f(x_{n-1}(t))<0$.

(vi) If $n\ge3$ is odd,  $x_{n-1}(t)<2$, then $x_n(t)-f(x_{n-1}(t))<0$.
\end{prop}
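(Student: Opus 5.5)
All six items follow from the original recursion \eqref{eq:TM-trace-recursion},
\[
x_{n+1}-2=x_{n-1}^2\,(x_n-2),\qquad n\ge2,
\]
together with its reformulation \eqref{itx-xy}, \eqref{itxy-xy}. The plan is to derive them in the listed order, since (v) and (vi) use (i)--(iii).

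For (i), if $x_n(t)=2$ with $n\ge2$, then the displayed identity gives $x_{n+1}(t)=2$, and induction finishes it. For (ii), suppose $x_n(t)=0$. If $n\ge1$, then $n+1\ge2$, so $x_{n+2}-2=x_n^2(x_{n+1}-2)=0$. Then (i) applies from index $n+2\ge3$. The "moreover" part is the contrapositive. If $x_k(t)=2$ for some $2\le k\le n$, then (i) forces $x_n(t)=2$. If $x_k(t)=0$ for some $1\le k\le n-2$, then $x_{k+2}(t)=2$ with $k+2\le n$, so again $x_n(t)=2$. For (iii), the identity shows that $x_{n+1}-2$ has the sign of $x_n-2$ multiplied by a square. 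Hence $x_n\le2$ gives $x_{n+1}\le2$ whenever $n\ge2$, and induction gives the rest. The low indices $n=1$ (and the passage $x_1\to x_2$) need a direct check. From (iv), $x_2=f(x_1)-4\lambda^2\le 2$ whenever $x_1\le2$, since $f(x_1)\le 2$ for $|x_1|\le2$. For $x_1<-2$ one has to look at $x_2$ directly; I would check this from the explicit formulas $x_1=t^2-\lambda^2-2$ and $x_2=(t^2-\lambda^2)^2-4t^2+2$. Item (iv) itself is a direct computation: $f(x_1)=(t^2-\lambda^2-2)^2-2$, and expanding gives $x_2-f(x_1)=-4\lambda^2$.

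For (v) and (vi), write $y_n=x_n-f(x_{n-1})$ and use $y_{n+1}=g(x_n)y_n=(2-x_n)y_n$ from \eqref{itxy-xy}. Iterating from $y_2=-4\lambda^2$ gives
\[
y_n=-4\lambda^2\prod_{k=2}^{n-1}\bigl(2-x_k\bigr).
\]
So the sign of $y_n$ is determined by the number of indices $k\in\{2,\dots,n-1\}$ with $x_k>2$, provided no factor vanishes.

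The vanishing is ruled out using (ii). Under the hypothesis $x_{n-1}\ne2$ of (v), or $x_{n-1}<2$ of (vi), no $x_k$ with $2\le k\le n-1$ equals $2$. By (iii), once some $x_k\le2$ (and here $<2$), all later factors are positive. Therefore the negative factors form an initial segment $x_2,\dots,x_{m}>2$.

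In case (vi), $x_{n-1}<2$ together with (iii) shows all $x_k<2$ for $k\le n-1$, so the product is positive and $y_n<0$. There is a gap here: (iii) gives monotonicity forward, not backward. I will instead use the identity with $x_{k-1}^2>0$: the sign of $x_{k+1}-2$ equals the sign of $x_k-2$ whenever $x_{k-1}\ne0$. The "moreover" part of (ii) guarantees $x_{k-1}\ne0$ for the relevant $k$. Hence all of $x_2-2,\dots,x_{n-1}-2$ share one sign. In (vi) that sign is negative, so every factor $2-x_k$ is positive and $y_n=-4\lambda^2\cdot(\text{positive})<0$.

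In (v), the same argument shows that all $x_k-2$, $2\le k\le n-1$, have a common sign. If that sign is negative, we are done as in (vi). If all $x_k>2$, the product has $n-2$ negative factors, and $n-2$ is even because $n$ is even; so again $y_n<0$.

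The main obstacle is the careful sign bookkeeping for the low indices $k=1,2$. The identity starts only at $n\ge2$, so it needs $x_1\ne0$ to propagate the sign from $x_2$ to $x_3$; this is supplied by (ii) via $1\le k\le n-2$. That is exactly why (v) requires $n\ge4$ and (vi) requires $n\ge3$.
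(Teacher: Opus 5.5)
Your proofs of (i), (ii), (iv), and of (iii) for $n\ge2$ are the paper's: each is a direct consequence of $x_{n+1}-2=x_{n-1}^2(x_n-2)$. Your proofs of (v) and (vi) are correct and essentially the paper's. The paper iterates $y_{n+2}=(2-x_n)^2x_{n-1}^2\,y_n$ from $y_2=-4\lambda^2$. That is your one-step product $y_n=-4\lambda^2\prod_{k=2}^{n-1}(2-x_k)$ with consecutive factors paired via $2-x_{k+1}=x_{k-1}^2(2-x_k)$. Your observation that $x_2-2,\dots,x_{n-1}-2$ share a common sign is the same identity, and in both versions the ``moreover'' part of (ii) rules out vanishing factors.

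The one step that would fail is your deferred ``direct check'' of (iii) at $n=1$ when $x_1<-2$. It cannot succeed. At $t=0$ one has $x_1=-\lambda^2-2<-2$ but $x_2=\lambda^4+2>2$ for every $\lambda\ne0$. More generally, $x_1<-2$ and $x_2>2$ whenever $|t|<\sqrt{1+\lambda^2}-1$. So (iii) is false for $n=1$ and must be read with $n\ge2$. That is all the recursion gives, all the paper's one-line proof covers, and all that is used later (bands of large level in Lemmas \ref{smalldevi} and \ref{lemma3}).

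Also, your sentence ``$x_2\le2$ whenever $x_1\le2$, since $f(x_1)\le2$ for $|x_1|\le2$'' only establishes the case $|x_1|\le2$. Drop the $n=1$ claim rather than try to verify it. Your arguments for (v) and (vi) never invoke (iii) at $n=1$, so they are unaffected.
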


\begin{proof}
Properties (i), (ii), and (iii) are direct consequences of \eqref{eq:TM-trace-recursion}.
We get (iv) by a direct computation.

By \eqref{itx-xy} and \eqref{eq:TM-trace-recursion},
$$x_{n+2}(t)-f(x_{n+1}(t))=(2-x_n(t))^2x_{n-1}^2(t)(x_n(t)-f(x_{n-1}(t))).$$
If $n\ge4$ is even, then 
 $$x_n(t)-f(x_{n-1}(t))=(x_2(t)-f(x_1(t)))
 \prod_{i=1}^{n/2-1}(2-x_{2i}(t))^2 x_{2i-1}^2(t).$$
If $x_{n-1}(t)\ne 2$, then by (ii), $x_n(t)-f(x_{n-1}(t))<0$.

Suppose $n\ge3$ is odd. By (ii), (iv), (v),  and $x_{n-1}(t)\ne2$, we have
$$x_{n-1}(t)-f(x_{n-2}(t))<0.$$
If $x_{n-1}(t)<2$, then
$$x_{n}(t)-f(x_{n-1}(t))=(2-x_{n-1}(t))(x_{n-1}(t)-f(x_{n-2}(t)))<0.$$
\end{proof}

%\subsection{cutting-off intervals}

For every interval $I\subset \mathbb{R}$,
the $n$-level deviation on $I$ is defined to be
 $$d(n,I):=\sup_{t\in I}|y_{n}(t)|.$$

\begin{lemm}\label{smalldevi}
There is a decreasing sequence of bands $\{J_n\}_{n\ge0}$,
where $J_n$ is a band of level $n+k$ for all $n\ge1$ and some $k\ge2$,  such that
$$d(n+k,J_{n})<d(k,J_0)2^{-n(n-3)/2}.$$
Hence, for every $\delta>0$, there exists a positive integer $n$
and band $I_0$ of level $n$ such that $d(n,I_0)<\delta$.
\end{lemm}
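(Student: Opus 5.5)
The plan is to build the bands inductively so that on each new band the factor $g(x_m)=2-x_m$ stays at most $2^{-m}$ or so. Then the recursion $y_{m+1}=g(x_m)y_m$ from \eqref{itxy-xy} yields a superexponentially small deviation. The exponent $n(n-3)/2=\sum_{j=1}^{n}(j-2)$ suggests that at the $j$-th step we aim for a multiplier of size $2^{-(j-2)}$. Equivalently, over two consecutive steps we need a product bounded by $2^{-(2j-3)}$.

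\textbf{Choice of $J_0$.} Fix $k\ge2$ and a band $J_0$ of level $k$. On $J_0$, $x_k$ maps monotonically onto $[-2,2]$, so $J_0$ contains a point $t^*$ with $x_k(t^*)=2$. By Proposition~\ref{basic}(i), $x_m(t^*)=2$ for all $m\ge k$. By the Floquet facts, such a point is a common endpoint/shared point of bands at every higher level. So near $t^*$ the values $x_m$ stay close to $2$, and $g(x_m)=2-x_m$ is small.

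\textbf{Inductive step.} Given $J_{n-1}$ of level $n-1+k$, I would take $J_n\subset J_{n-1}$ to be the band of level $n+k$ containing $t^*$ (or adjacent to it), intersected appropriately. On $J_n$ the map $x_{n+k}$ goes onto $[-2,2]$ while the previous traces $x_{n+k-1}$ lie in a small neighborhood of $2$. Quantitatively, I would use $y_{n+k}=g(x_{n+k-1})y_{n+k-1}$ and bound $\sup_{J_n}|2-x_{n+k-1}|$. Near a point where $x_{m-1}=2$, the relation $x_m-2=x_{m-2}^2(x_{m-1}-2)$ from \eqref{eq:TM-trace-recursion} links the derivatives across levels. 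Bands of level $m$ around $t^*$ shrink relative to those of level $m-1$, since the degree doubles. So on the band of level $m$ around $t^*$, $|x_{m-1}-2|$ is bounded by roughly the ratio of band lengths times $4$. I would aim to show this ratio is at most $2^{-(m-k)}$ up to a constant that can be absorbed by starting $n$ high.

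A cleaner alternative is to pick $J_n$ where $x_{n+k-1}\in[2-\epsilon_n,2]$. By monotonicity of $x_{n+k-1}$ on $J_{n-1}$, the set $\{x_{n+k-1}\in[2-\epsilon_n,2]\}$ is an interval. Because $x_{n+k}=f(x_{n+k-1})+y_{n+k}$ has $|y_{n+k}|$ tiny, $x_{n+k}$ sweeps $[-2,2]$ as $x_{n+k-1}$ goes over a neighborhood of $2$ of size about $\epsilon$. We need $x_{n+k}$ to cover $[-2,2]$ there, which requires $\epsilon\gtrsim$ const. That fails, so the product structure of two steps must be used instead. The factor $x_{n-1}^2(2-x_n)$ is needed with $x_{n-1}$ small, i.e.\ near $0$: Proposition~\ref{basic}(ii) shows $x_{n-1}=0$ forces $x_{n+1}=2$. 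So I would locate $J_n$ around preimages of $0$ at level $n+k-2$, where $x_{n+k-2}^2$ is small. This gives the extra decay each step via $y_{m+1}=(2-x_m)(2-x_{m-1})y_{m-1}$ and the identity $x_{m+1}-2=x_{m-1}^2(x_m-2)$.

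\textbf{Main obstacle and conclusion.} Making the per-step gain $2^{-(j-2)}$ precise, i.e.\ controlling $\sup_{J_n}|2-x_{n+k-1}|$ uniformly on a whole band rather than at a point, is the main obstacle. I would handle it using monotonicity on bands and the derivative relation from the recursion. The second statement is then immediate: take $n$ large, $I_0=J_n$, and note that $d(k,J_0)2^{-n(n-3)/2}\to0$.
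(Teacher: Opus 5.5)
Your proposal has a genuine gap. The estimate you postpone as the ``main obstacle'' is the whole content of the lemma, and neither route you sketch toward it works.

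The first route is a one-step scheme: bound $d(n+k,J_n)$ by $\sup_{J_n}g(x_{n+k-1})\cdot d(n+k-1,J_{n-1})$ and hope for $\sup_{J_n}g(x_{n+k-1})\lesssim 2^{-(n-2)}$. This fails on every band. At the point of a band $J$ of level $m+1$ where $x_{m+1}=-2$, the relation $x_{m+1}=f(x_m)+y_{m+1}$ gives $x_m^2=-y_{m+1}$. Hence $g(x_m)\ge 2-\sqrt{d(m+1,J)}$ there, so the newest multiplier is nearly $2$ once the deviation is small. In particular, your claim that $x_{n+k-1}$ stays near $2$ on the band around $t^*$ is false.

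The second route, localizing near zeros of $x_{n+k-2}$, goes the wrong way. At a zero $c$ of $x_m$ one has $g(x_m(c))=2$, and by Proposition~\ref{basic}(v)(vi), $x_{m+1}(c)<f(0)=-2$, so $g(x_{m+1}(c))>4$. The identity $g(x_{m+1})=x_{m-1}^2g(x_m)$ makes later \emph{traces} equal $2$ there, but the multipliers of $y$ at those levels are large. Moreover, the bands around $t^*$ lie strictly to one side of these zeros.

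The missing idea is that the decay comes from the \emph{older} factors. On $J_n$ one has $0\le g(x_{n+k-j})<2^{2-j}$ for $1\le j\le n$: the newest factor is bounded only by $2$, the next by $1$, and so on. The sum $\sum_{j=1}^n(2-j)=-n(n-3)/2$ is exactly this count.

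The paper takes $J_n=[a_0,b_n]$ to be the band of level $n+k$ having your point $t^*=a_0$ as an endpoint. It then uses the sign information of Proposition~\ref{basic}(iv)--(vi), namely $x_{m+1}<f(x_m)$ whenever $x_m<2$, which your proposal never invokes. This sign information does two jobs.
\begin{enumerate}
\item It gives nestedness. At the zero $c_{n-1}$ of $x_{n+k-1}$ in $J_{n-1}$, $x_{n+k}(c_{n-1})<f(0)=-2$, so $b_n<c_{n-1}$ and $J_n\subset J_{n-1}$.
\item It gives backward control at the endpoint $b_n$, where $x_{n+k}(b_n)=-2$. Monotonicity gives $x_m(b_n)>0$, so one can iterate $x_m(b_n)>h(x_{m+1}(b_n))$ with the inverse branch $h(u)=\sqrt{u+2}$. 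This yields $x_{n+k-j}(b_n)>h^{j-1}(0)$. The contraction $2-\sqrt{2+u}\le(2-u)/2$ then gives $2-h^{j-1}(0)\le 2^{2-j}$.
\end{enumerate}
Finally, $J_n\subset J_{n-j}$ and $x_{n+k-j}$ is monotone on $J_{n-j}$ with value $2$ at $a_0$. So the bounds at the two endpoints $a_0,b_n$ hold on all of $J_n$. This gives uniform control on the whole band without any derivative or band-length comparison.
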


\begin{proof}
Take a band $J_0=[a_0,b_0]$ of level $k$.
Without loss of generality, suppose $x_k(t)$ is strictly decreasing on $J_0$,
i.e., $x_k(a_0)=2$, $x_k(b_0)=-2$, and $x_k(J_0)=[-2,2]$.

For all $n>0$,
by Proposition \ref{basic} (i) (iii),  
we may take $J_{n}$ to be the unique band $[a_n,b_n]$ of level $n+k$ with $a_n=a_0$.
Note that $x_{n+k}$ is decreasing on $J_n$.

Take any $n>0$.
Since there exists a unique $c_{n-1}\in(a_{n-1},b_{n-1})$ such that $x_{n+k-1}(c_{n-1})=0$, 
by Proposition \ref{basic} (v) (vi), 
$$x_{n+k}(c_{n-1})<f(x_{n+k-1}(c_{n-1}))=-2.$$
This implies $b_{n}<c_{n-1}<b_{n-1}$, and hence $J_{n}\subset J_{n-1}$.
In particular, we have
$$%%\begin{equation}\label{bis}
a_0<b_n<b_{n-1}<\cdots<b_{1}<b_0.
$$%\end{equation}
Then for $0\le i<n$, $-2<x_{i+k}(b_n)<2$, and hence by Proposition \ref{basic} (v) (vi), 
\begin{equation}\label{negative}
x_{i+k+1}(b_n)-f(x_{i+k}(b_n))<0.
\end{equation}

Define $h(t)=\sqrt{t+2}$. We have $f\circ h=id$ when $t\ge-2$ and $h$ is strictly increasing for $t\ge-2$. 

Since, by \eqref{negative}, $f(x_{n+k-1}(b_n))>x_{n+k}(b_n)=-2$, we have 
$$x_{n+k-1}(b_n)=h(f(x_{n+k-1}(b_n)))>h(-2)=0.$$
By \eqref{negative}, $f(x_{n+k-2}(b_n))>x_{n+k-1}(b_n)>0$, we have
$$x_{n+k-2}(b_n)=h(f(x_{n+k-2}(b_n)))>h(0).$$
By induction, for $1\le j\le n$,
$$x_{n+k-j}(b_n)>h^{j-1}(0).$$
For $j\ge0$, since
\begin{equation*}
g(h^{j+1}(0))=2-\sqrt{h^j(0)+2}=2-\sqrt{4-g(h^j(0))}
    =\frac{g(h^j(0))}{2+\sqrt{4-g(h^j(0))}}\le\frac{g(h^j(0))}{2},
\end{equation*}
we have $0< g(h^j(0))\le 2^{-j+1}$.
Then, for any $t\in J_n$, for $1\le j\le n$,
\begin{equation*}
h^{j-1}(0)< x_{n+k-j}(t)\le 2,
\end{equation*}
and hence
$$0\le g(x_{n+k-j}(t))<2^{-j+2}.$$

Thus, for $t\in J_n$, a band of level $n+k$,
\begin{equation*}
|y_{n+k}(t)|=\prod_{j=1}^{n}g(x_{n+k-j}(t))|y_k(t)| <
2^{\sum_{j=1}^{n}(-j+2)}d(k,J_0)=d(k,J_0)2^{-n(n-3)/2}.
\end{equation*}
This completes the proof.
\end{proof}

Therefore, we may begin with a band $I_0$ of a large level $n$
 with $d(n,I_0)$ small enough.

\begin{lemm}
\label{lemma3}
For every integer $K\ge 1$, there exists $\delta_1>0$ such that, 
if $I_0$ is a band of level $n$  with $d(n,I_0)<\delta_1$, then  
\begin{equation*}
 I_0\cap\sigma_{n+1}\cap\cdots\cap\sigma_{n+K}
=I_1\sqcup\cdots\sqcup I_{2^K},
\end{equation*}
    where $I_1,\ldots,I_{2^K}$ are $2^K$ different intervals,
    and
\begin{equation*}
    x_{n+K}(I_j)\supset [-2,2\cos{\frac{\pi}{2^{K+1}}}]
\end{equation*}
    for  $ j=1,\ldots, 2^K$.
\end{lemm}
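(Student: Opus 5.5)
We plan to compare the true orbit $x_n,\dots,x_{n+K}$ on $I_0$ with the ideal Chebyshev orbit, via the angular parametrization of the band. Since $x_n$ is strictly monotone on $I_0$ and maps it onto $[-2,2]$, write $x_n(t)=2\cos\theta(t)$ with $\theta:I_0\to[0,\pi]$ a homeomorphism. By \eqref{itxy-xy}, for $0\le k\le K$,
\[
x_{n+k}=f^k(x_n)+\sum_{i=1}^{k}(\text{error propagated from }y_{n+i}),\qquad y_{n+i}=\prod_{j=0}^{i-1}g(x_{n+j})\,y_n .
\]
Since $|g|\le 4$ as long as $|x|\le 2$, and $f$ is $4$-Lipschitz on $[-2,2]$, an induction over the $K$ steps gives a bound $|x_{n+k}(t)-2\cos(2^k\theta(t))|\le C_K\,d(n,I_0)$ for all $t\in I_0$. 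We have to be careful here: the orbit might leave $[-2,2]$ slightly. We would therefore work on a slightly larger box $|x|\le 3$, where the Lipschitz constants are still bounded by absolute constants, so $C_K\le 10^{K}$. Choosing $\delta_1$ small makes the uniform error $\varepsilon:=C_K\delta_1$ as small as needed.

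Next we locate the children. The ideal function $2\cos(2^K\theta)$ has $2^K$ monotone laps over $\theta\in[0,\pi]$, the $j$-th on $[(j-1)\pi/2^K,\,j\pi/2^K]$. On each lap it runs between $\pm2$, and it takes the value $-2$ exactly once (at a lap endpoint) and $2$ at the other. Since $y_{n+K}$ has a definite sign (Proposition \ref{basic} (v),(vi): the deviation is negative where relevant), the true $x_{n+K}$ lies slightly below the ideal value. On each lap we therefore find points where $x_{n+K}\le -2$ near the $-2$ end. Near the $+2$ end, by the intermediate value theorem, $x_{n+K}$ exceeds $2\cos(\pi/2^{K+1})$ provided $\varepsilon<2-2\cos(\pi/2^{K+1})$ up to a margin. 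Hence on each lap there is a subinterval $I_j$ with $x_{n+K}(I_j)\supset[-2,2\cos\frac{\pi}{2^{K+1}}]$. If we take $I_j$ to be the connected component of $\{|x_{n+K}|\le 2\}$ within the lap, it is mapped by $x_{n+K}$ (a band of level $n+K$ by Floquet theory) onto $[-2,2]$, which is even stronger. We also need $I_j\subset\sigma_{n+1}\cap\dots\cap\sigma_{n+K-1}$. For this we use the ordering given by Proposition \ref{basic} (iii): if $x_{n+k}\le 2$ then all later traces are $\le 2$. In addition, the points with $x_{n+K}\ge-2$ lying inside a lap have intermediate traces close to $2\cos(2^k\theta)$. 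This alone does not put them in $[-2,2]$, so we would handle this step with the recursion lower bound: $x_{n+k}<-2$ forces $x_{n+k+1}=f(x_{n+k})+y>2-\varepsilon'$ only when $y$ is small. Combined with (iii) and the fact that $f(x)>2$ for $|x|>2$, this gives that $x_{n+k}<-2$ at an intermediate step would make $x_{n+K}$ far from the ideal value, which is a contradiction.

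Finally we must show that the intersection consists of exactly these $2^K$ intervals and no more. The degree count does this: $x_{n+K}$ restricted to $I_0$ has at most $2^K$ bands inside $I_0$, since $I_0$ contains $2^K$ bands of level $n+K$ when counted against $\deg x_{n+K}=2^{n+K}$ and the nesting of bands. We have exhibited at least $2^K$ disjoint ones, one per lap; they are disjoint because between consecutive laps the ideal value equals $\pm2$ at the junction and the true value is pushed strictly outside on the $-2$ side. The main obstacle is the sign and boundary bookkeeping at lap junctions where the ideal orbit touches $-2$ or $2$. There, a perturbation of size $\varepsilon$ can merge or split components. We would first try to resolve this using the strict negativity of $y_{n+k}$ from Proposition \ref{basic}, which pushes the true trace uniformly in one direction and so separates the components at $-2$ junctions, while at $+2$ junctions the statement only claims up to $2\cos(\pi/2^{K+1})$.
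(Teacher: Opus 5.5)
Your proposal has a genuine gap at the intermediate levels $\sigma_{n+1},\dots,\sigma_{n+K-1}$. These gaps sit exactly at the junctions you treat as harmless.

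You take $I_j$ to be the component of $\{|x_{n+K}|\le 2\}$ inside the $j$-th lap. You then argue that $x_{n+k}<-2$ for some $k<K$ would force $x_{n+K}$ far from $2\cos(2^K\theta)$. No such contradiction exists: your own first step gives $|x_{n+K}-2\cos(2^K\theta)|\le\varepsilon$ on all of $I_0$. In fact $\{x_{n+k}<-2\}\cap I_0$ consists of short intervals around the zeros of $x_{n+k-1}$. These lie near the odd multiples of $\pi/2^k$, i.e.\ near even multiples of $\pi/2^K$, which are the $+2$ junctions of your laps. There $x_{n+K}\approx 2$, in full agreement with the ideal orbit.

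Already for $K=2$ this happens. On the gap $(\theta^-_{1/2},\theta^+_{1/2})$ of $\sigma_{n+1}$ one has $x_{n+1}<-2$ but $x_{n+2}\in(2-O(\varepsilon),2]$. Moreover $x_{n+2}=x_n^2(x_{n+1}-2)+2$ equals $2$ there only at $\theta=\pi/2$. So the band of level $n+2$ through $\xi(\theta^-_{1/2})$ is $[\xi(\theta^+_{1/4}),\xi(\pi/2)]$, which protrudes into that gap of $\sigma_{n+1}$. Hence your $I_j$ is in general not contained in $\sigma_{n+1}\cap\cdots\cap\sigma_{n+K-1}$, and the claimed equality fails for it.

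For the same reason the correct children are not full bands. At an end adjacent to an earlier gap, and at $\xi(\pi)$, $x_{n+K}$ takes a value in $(2-\varepsilon,2)$, not $2$. This is why the lemma only asserts $x_{n+K}(I_j)\supset[-2,2\cos\frac{\pi}{2^{K+1}}]$; your claim that $I_j$ maps onto $[-2,2]$ is false for them.

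What is missing is the level-by-level description used in the paper.

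1. Since all $x_{n+k}\le 2$ on $I_0$ by Proposition~\ref{basic}(iii), the set in question is $I_0\setminus\bigcup_{k=1}^{K}\{x_{n+k}<-2\}$.
2. At level $k$ a gap opens around the zero of $x_{n+k-1}$ near each odd multiple of $\pi/2^k$; there $x_{n+k}<f(0)=-2$ by Proposition~\ref{basic}(v),(vi). Estimate \eqref{err} confines this gap to a $\delta_2$-window.
3. One must show the gap is a single interval in that window. The paper counts roots of $x_{n+k}+2$. Alternatively, strict monotonicity of $x_{n+k}$ on its bands rules out two components in a window where $x_{n+k}<-2+3\varepsilon$.
4. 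The key fact you need: at a gap edge, where $x_m=-2$, all later traces satisfy $2-\varepsilon<x_{m+j}<2$. So later levels neither reopen nor move these edges, and they remain endpoints of the final children.
5. Consequently $x_{n+K}$ runs over each child from $-2$ at one end to a value above $2-\varepsilon$ at the other. This yields $2^K-1$ disjoint gaps and $2^K$ children.

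Two smaller points need fixing as well.

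First, your claim that the true $x_{n+K}$ lies below the ideal value does not follow from (v),(vi). Those only give $x_{m+1}<f(x_m)$ one step at a time. To first order in $y_n$ one gets $x_{n+3}-f^3(x_n)\approx(2-x_n)x_n^2\,u(3u-4)\,y_n$ with $u=f(x_n)$, which is positive for $u\in(0,4/3)$ when $y_n<0$. The point with $x_{n+K}<-2$ at an odd junction should instead come from the zero of $x_{n+K-1}$.

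Second, the degree count via nesting is unavailable, because bands of level $n+K$ are not nested in $I_0$. For example, the band through $\xi(\pi)$ leaves $I_0$, since $x_{n+K}(\xi(\pi))<2$.
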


\begin{proof}
For $n\ge1$, $1\le j\le K$, by \eqref{itxy-xy}, 
$x_{n+j}$ is a polynomial in $x_{n},y_{n}$, which we denote by 
$$x_{n+j}=h_j(x_{n},y_{n}).$$
Take $\varepsilon=2^{-20K}$.
There exist $\delta_{1,j}>0$ such that, for any 
$x\in [-3,3]$, $|y|<\delta_{1,j}$,
\begin{equation}\label{pertb}
|h_j(x,y) - h_j(x,0)|<\varepsilon.
\end{equation}

Let $\eta=\min_{j=1,\ldots,K}\{\delta_{1,{j}}\}$ and $\delta_1=\eta/5^K$.
Suppose $I_0$ is a level $n$ band with $d(n,I_0)<\delta_1$.
Then $x_n$ is monotone on $I_0$ and $x_n(I_0)=[-2,2]$.
Without loss of generality, suppose $x_n$ is strictly decreasing on $I_0$.

Notice that $h_j(x,0)=f^j(x)$, $|f^j(x)|\le2$ if $|x|\le2$, and $|y_n|\le\delta_1$.
By \eqref{pertb}, for any $t\in I_0$, $1\le k\le K$,
$$|x_{n+k}(t)|<2+\varepsilon,\quad |y_{n+k}(t)|<5^k|y_n|\le\eta.$$
Since  $x_{n+k+j}=h_j(x_{n+k},y_{n+k})$, we have
\begin{equation}\label{err}
\sup\{|x_{n+k+j}(t)-f^j(x_{n+k}(t))| : k\ge0, j\ge1, 1\le j+k \le K,
t\in I_0 \}<\varepsilon.
\end{equation}

We parameterize $I_0$ by $\xi: [0,\pi]\to I_0$ 
so that $x_{n}(\xi(\theta))=2\cos{\theta}$.

Notice that $f^j(2\cos{\theta})=2\cos{2^j\theta}$,
one may check that, for $1\le j\le K$,
\begin{equation}\label{meas}
\mathcal{L}\{\theta\in[0,\pi]:x_{n+j}(\xi(\theta))\le -2+\varepsilon\}
<\pi\sqrt{\varepsilon},
\end{equation}
where $\mathcal{L}$ is the Lebesgue measure. 
Indeed, since $f^j(x_n(\xi(\theta)))=2\cos 2^j\theta$, 
$$\begin{array}{cl}
&\mathcal{L}\{\theta\in[0,\pi] : x_{n+j}(\xi(\theta))\le -2+\varepsilon\}\\
\le &\mathcal{L}\{\theta\in[0,\pi] : f^j(x_n(\xi(\theta)))< -2+2\varepsilon\}\\
= &\mathcal{L}\{\theta\in[0,\pi] : 2\cos 2^j\theta< -2+2\varepsilon\}\\
= &\mathcal{L}\{\theta\in[0,\pi] : 2\cos 2\theta< -2+2\varepsilon\}\\
=&2\arccos(1-\varepsilon)<\pi\sqrt{\varepsilon},
\end{array}$$
where the first inequality is due to \eqref{err};
the second equality follows from the fact that 
$\theta\to 2\theta\mod 2\pi$ preserves Lebesgue measure on $[0,2\pi]$;
the last inequality is due to, letting $\alpha=\arccos(1-\varepsilon)$, 
by $\sin x>{2\sqrt{2}x}/{\pi}$ for $0< x\le \pi/4$,
$$\varepsilon=1-\cos\alpha=2\sin^2\frac{\alpha}{2}
>\frac{4\alpha^2}{\pi^2}=\left(\frac{2\arccos(1-\varepsilon)}{\pi}\right)^2.$$

Denote $\delta_2=\pi\sqrt{\varepsilon}(\ll 2^{-K})$. 
We will show that, for $k=1,\cdots,2^K$,
$$I_k\supset[\xi(\frac{(k-1)\pi}{2^K}+\delta_2),\xi(\frac{k\pi}{2^K}-\delta_2)].$$

Take any $t\in I_0$. Since $x_n(t)\le2$, 
by Proposition \ref{basic} (iii), $\forall j\ge1$, $x_{n+j}(t)\le2$.

Suppose $x_m(t)=-2$ for some $n\le m< n+K$. Notice that $f(-2)=f(2)=2$.
By Proposition \ref{basic} (ii),(v),(vi), $x_{m+1}(t)<2.$
By \eqref{err}, for $1\le j\le n+K-m$,
\begin{equation}
    \label{>2-varepsilon}
x_{m+j}(t)>2-\varepsilon.
\end{equation}
And then by induction and Proposition \ref{basic} (ii), for $1\le j\le n+K-m$,
$x_{m+j}(t)<2.$
In summary, for $1\le j\le n+K-m$,
$$2-\varepsilon<x_{m+j}(t)<2.$$ 
In particular, since $x_n(\xi(\pi))=-2$, 
for $1\le j\le K$, $2-\varepsilon<x_{n+j}(\xi(\pi))<2$.

If $x_m(t)=0$ for some $n\le m< n+K$, then $x_{m+j}(t)=2$ for any $j>1$. 
Moreover, by Proposition \ref{basic} (v),(vi),
$$x_{m+1}(t)<f(x_m(t))=-2.$$

First, by definition, $x_n(\xi(\pi/2))=2\cos\pi/2=0$. 
Then $x_{n+1}(\xi(\pi/2))<-2$.
Since $$x_{n+1}(\xi(0))=2,\quad x_{n+1}(\xi(\pi))>2-\varepsilon,$$ 
by the intermediate value theorem, there exist $\theta^{-}_{1/2},\theta^{+}_{1/2}\in(0,\pi)$ such that
$$0<\theta^{-}_{1/2}<\pi/2<\theta^{+}_{1/2}<\pi,\quad
x_{n+1}(\xi(\theta^{-}_{1/2}))=x_{n+1}(\xi(\theta^{+}_{1/2}))=-2.$$
Note that there are at most $2^{n+1}$ solutions to the equation $x_{n+1}(t)=-2$.
There are $2^n$ bands in $\sigma_n$, $I_0$ is one of these bands.
As in $I_0$,  each of the $2^n$ bands contains  two distinct points solving the equation.
Then $\theta^{-}_{1/2},\theta^{+}_{1/2}\in(0,\pi)$ are unique.

Since $f(2\cos\theta)=2\cos2\theta$, $f(x_n(\xi(\pi/2)))=-2$. By \eqref{meas}, 
$$\theta^{-}_{1/2}, \theta^{+}_{1/2}\in(\pi/2-\delta_2,\pi/2+\delta_2).$$
And by \eqref{>2-varepsilon}, for $j=2,\cdots,K$,
$$x_{n+j}(\xi(\theta^{-}_{1/2})),x_{n+j}(\xi(\theta^{+}_{1/2}))\in(2-\varepsilon,2).$$
Note that $x_{n+1}$ is strictly monotone on $[\xi(0),\xi(\theta^{-}_{1/2})]$ 
and $[\xi(\theta^{+}_{1/2}),\xi(\pi)]$ and
$$[\xi(0),\xi(\theta^{-}_{1/2})]\cup[\xi(\theta^{+}_{1/2}),\xi(\pi)]
=I_0\backslash(\xi(\theta^{-}_{1/2}),\xi(\theta^{+}_{1/2}))
=I_0\cap \sigma_{n+1},$$
with $x_{n+1}([\xi(0),\xi(\theta^{-}_{1/2})])\supset [-2,2-\varepsilon]$,
$x_{n+1}([\xi(\theta^{+}_{1/2}),\xi(\pi)])\supset [-2,2-\varepsilon]$.

Next, we continue this process up to $n+2$ to obtain 
$\theta^{-}_{1/4}, \theta^{+}_{1/4}$ and $\theta^{-}_{3/4}, \theta^{+}_{3/4}$.
By definition,
 $x_{n+2}(\xi(\pi/4))<f^2(x_n(\xi(\pi/4)))+\varepsilon=-2+\varepsilon$.
 Then
 \begin{equation*}
  \theta^{-}_{1/4}, \theta^{+}_{1/4},\pi/4\in  \{\theta\in[0,\pi]:x_{n+2}(T(\theta))<-2+\varepsilon\}.
\end{equation*}
By \eqref{meas}, we have
\begin{equation*}
|\theta^{-}_{1/4}-\pi/4|<\delta_2, \quad |\theta^{+}_{1/4}-\pi/4|<\delta_2.
\end{equation*}
The same argument applies to $\theta^{-}_{3/4}$ and $\theta^{+}_{3/4}$.
Note also that $x_{n+2}$ is strictly monotone on $[\xi(0),\xi(\theta^{-}_{1/4})]$,
$[\xi(\theta^{+}_{1/4}),\xi(\theta^{-}_{1/2})]$,
$[\xi(\theta^{+}_{1/2}),\xi(\theta^{-}_{3/4})]$ and $[\xi(\theta^{+}_{3/4}),\xi(\pi)]$,
and
\begin{equation*}
\begin{aligned}
I_0\cap\sigma_{n+1}\cap\sigma_{n+2}
={}&[\xi(0),\xi(\theta^-_{1/4})]
   \cup[\xi(\theta^+_{1/4}),\xi(\theta^-_{1/2})]\\
 &{}\cup[\xi(\theta^+_{1/2}),\xi(\theta^-_{3/4})]
   \cup[\xi(\theta^+_{3/4}),\xi(\pi)].
\end{aligned}
\end{equation*}
Again by \eqref{>2-varepsilon}, the value of $x_{n+2}$ on
 each of the four intervals above contains $[-2,2-\varepsilon]$.

 Continuing this process up to level $n+K$,
 we obtain the edge points of gaps,
 \begin{equation*}
    \Theta^{+}:=\{\theta^{+}_{k/2^K}:k=1,\ldots,2^K-1\},\quad
    \Theta^{-}:=\{\theta^{-}_{k/2^K}:k=1,\ldots,2^K-1\}.
 \end{equation*}
 Here the subscript denotes the dyadic rational as a real number, not as a formal fraction.
Thus by construction, for $k=1,\ldots,2^K-1$,
\begin{equation*}
\theta^{+}_{k/2^K},\theta^{-}_{k/2^K}\in [\frac{k\pi}{2^K}-\delta_2,\frac{k\pi}{2^K}+\delta_2].
\end{equation*}
Furthermore, for $k$ odd, we have
$x_{n+K}(\xi(\theta^{-}_{k/2^K}))=x_{n+K}(\xi(\theta^{+}_{k/2^K}))=-2$,
since they are the endpoints of the gap of level $n+K$.
For $k$ even, 
$$2-\varepsilon<x_{n+K}(\xi(\theta^{-}_{k/2^K})),\ 
x_{n+K}(\xi(\theta^{+}_{k/2^K}))<2,$$ 
since they are the endpoints of gaps created at earlier levels.

Denote $\theta^{+}_{0}=0,\theta^{-}_{1}=\pi$.
Therefore, the required disjoint intervals
\begin{equation*}
    I_k=[\xi(\theta^{+}_{(k-1)/2^K}),\xi(\theta^{-}_{k/2^K})],\quad k=1,\ldots,2^K,
\end{equation*}
are intervals on which $x_{n+K}$ is strictly monotone, and
\begin{equation*}
x_{n+K}(I_k)\supset [-2,2-\varepsilon],\quad k=1,\ldots,2^K.
\end{equation*}
Moreover, $ I_0\cap\sigma_{n+1}\cap\cdots\cap\sigma_{n+K}
=I_1\sqcup\cdots\sqcup I_{2^K}$.
\end{proof}

Recall that we call an interval $I$ a band of level $n$,
if $I$ is a connected component of $\sigma_n$, i.e.,
$x_n$ is monotone on $I$ and $x_n(I)=[-2,2]$.
Now, we call an interval $I$ a cut-off interval of level $n$ for a fixed $K>0$, 
or simply a cut-off interval,
if $I$ is a sub-interval of a  band of level $n$ and
$$x_n(I)\supset[-2,2\cos\frac{\pi}{2^{K+1}}].$$

We denote the class of $2^K$ intervals obtained in Lemma \ref{lemma3} by
$$\mathcal{I}_{0,K}^{(1)}(I_0):=\{I_1, I_2,\cdots I_{2^K}\}.$$
All these intervals are cut-off intervals of level $n+K$ for the given $K$, 
since by definition of $\varepsilon$, $2-\varepsilon>2\cos(\pi/2^{K+1}).$

Lemma \ref{lemma3} also applies to the cut-off intervals.
\begin{cor}
\label{lemma3'}
Let $K\ge1$ and $\delta_1=\delta_1(K)$ be the constant given by Lemma \ref{lemma3}. 
If $I'$ is a cut-off  interval of level $n$ for the given $K$, and $d(n,I')<\delta_1$, then
\begin{equation*}
I'\cap\sigma_{n+1}\cap\cdots\cap\sigma_{n+K}
\supset I_2'\sqcup\cdots\sqcup I_{2^K-1}',
\end{equation*}
    where $I_2',\ldots,I_{2^K-1}'$ are $2^{K}-2$ cut-off intervals of level $n+K$ for $K$.
\end{cor}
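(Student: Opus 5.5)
The corollary should follow by rerunning the proof of Lemma~\ref{lemma3} with the band $I_0$ replaced by the cut-off interval $I'$. Since $I'$ lies in a band of level $n$ and $x_n$ is strictly monotone on it, we may assume without loss of generality that $x_n$ is decreasing. Choose $[a,b]\subset I'$ with $x_n(a)=2\cos\frac{\pi}{2^{K+1}}$ and $x_n(b)=-2$; this is possible because $x_n(I')\supset[-2,2\cos\frac{\pi}{2^{K+1}}]$. On $[a,b]$ use the parametrization $\xi:[\pi/2^{K+1},\pi]\to[a,b]$ with $x_n(\xi(\theta))=2\cos\theta$, which is the same parametrization as in the lemma, restricted to a subinterval of angles.

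Everything in the proof of Lemma~\ref{lemma3} that was local in $t$ carries over unchanged. The perturbation estimate \eqref{err} only uses $|x_n(t)|\le 2$ and $|y_n(t)|<\delta_1$ for $t\in[a,b]$. The statements built on Proposition~\ref{basic} are pointwise. The measure estimate \eqref{meas} only gets better when $[0,\pi]$ is replaced by a subinterval.

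The gap construction also runs as before. For each dyadic angle $k\pi/2^m$ with $1\le k<2^m$ and $m\le K$, the gap edges $\theta^\pm_{k/2^m}$ are produced by the same intermediate value argument and lie within $\delta_2$ of $k\pi/2^m$. Every such angle satisfies $k\pi/2^m\ge\pi/2^K>\pi/2^{K+1}+\delta_2$, so all these gaps lie inside the parameter range. The uniqueness argument, which counts solutions of $x_{n+j}=-2$, still applies, because $[a,b]$ sits inside a single band of level $n$ and the solutions found there are among the ones counted for that band.

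The intervals to keep are
\[
I'_k=[\xi(\theta^+_{(k-1)/2^K}),\xi(\theta^-_{k/2^K})],\qquad k=2,\dots,2^K,
\]
all of which lie in $[a,b]\cap\sigma_{n+1}\cap\cdots\cap\sigma_{n+K}$. On each of them $x_{n+K}$ is strictly monotone. At the endpoint with odd $k$ it equals $-2$, and at the other endpoint it lies in $(2-\varepsilon,2)$, or equals $-2$ again in the appropriate pattern, exactly as in the lemma. Hence $x_{n+K}(I'_k)\supset[-2,2-\varepsilon]\supset[-2,2\cos\frac{\pi}{2^{K+1}}]$. Each $I'_k$ is contained in a single band of level $n+K$, since it is a connected piece of $\sigma_{n+K}$ on which $x_{n+K}$ is monotone. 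So each $I'_k$ is a cut-off interval of level $n+K$.

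The first child $k=1$ is discarded. Its left endpoint $\theta=\pi/2^{K+1}$ is not an endpoint of a gap, so there is no control on the range of $x_{n+K}$ there. Indeed $2\cos(2^K\cdot\pi/2^{K+1})=0$, so the range may fail to contain $[0,2\cos\frac{\pi}{2^{K+1}}]$. Discarding one more child, so that exactly $2^K-2$ remain as stated, costs nothing. The retained ones can be relabeled $I'_2,\dots,I'_{2^K-1}$; the loss of at most two children is harmless for the later dimension count.

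The main point to check is the boundary bookkeeping, namely that no gap edge falls outside the restricted angle range. This holds because $\delta_2\ll 2^{-K}$. The near-$\pi$ endpoint behaves exactly as in the lemma, since $x_n(b)=-2$ there.
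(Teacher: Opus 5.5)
Your proposal is correct and is essentially the paper's argument: rerun Lemma~\ref{lemma3} on the angle range $[\pi/2^{K+1},\pi]$ provided by the cut-off condition, check that all gap edges $\theta^\pm_{k/2^m}$ fall inside it, and drop the first child because its left end is not a gap edge. Two small corrections. First, for later use the second discarded child should be exactly $I'_{2^K}$, with no relabeling. The paper keeps $j=2,\dots,2^K-1$ so that $I'_j$ still corresponds to the angles near $[(j-1)\pi/2^K,j\pi/2^K]$ and the family is symmetric under $j\mapsto 2^K+1-j$, which Corollary~\ref{1924} and Lemma~\ref{length} rely on. Second, on each kept child $x_{n+K}$ equals $-2$ at one endpoint and lies in $(2-\varepsilon,2)$ at the other; it is never $-2$ at both ends, since it is strictly monotone there.
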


\begin{proof}
The proof is the same as the proof of Lemma \ref{lemma3}, except that
we need to deal with $[\xi({\pi}/{2^K}),\xi(\pi)]\subset I'$. 
We get, for $j=2,\cdots 2^{K}-1$,
\begin{equation*}
I_j'=[\xi(\theta^{+}_{(j-1)/2^K}),\xi(\theta^{-}_{j/2^K})].
\end{equation*}
Notice that $[\xi(0),\xi(\theta^{-}_{1/2^K})]$ is not entirely contained in $I'$.
\end{proof}
For a cut-off interval $I'$, we denote the family 
$\{I_2',\ldots,I_{2^K-1}'\}$ by $\mathcal I_{0,K}^{(1)}(I')$.

\noindent\textbf{Remark.} The indices are designed to be invariant under $j\to 2^{K}+1-j$.
This symmetry will be used in the proof of Lemma \ref{length}.

\section{cut-off intervals with small deviation}
%%%%%%%%%%%%新lem3

We need to check whether the cut-off intervals obtained in the previous section have small deviation.
Notice that 
\begin{equation} 
\prod_{j=0}^{n-1}g(f^j(2\cos{\theta}))
=\prod_{j=0}^{n-1} g(2\cos{2^j\theta})
=4^n\prod_{j=0}^{n-1} \sin^2{2^{j-1}\theta}.
\end{equation}
Set $R_n(\theta)=4^n\prod_{j=0}^{n-1} \sin^2{2^{j-1}\theta}$.
Now we analyse the product.
Let $I=[0,1)$,  $T:I\to I$ be the doubling map $T(x)=2x \mod 1$. 
$T$ is strong mixing with respect to the Lebesgue measure $\mathcal{L}$. 
For $x\in I$, define
$$\varphi(x)=\log 4\sin^2\pi x.$$
Define the Birkhoff sum of $\varphi$ as 
$$S_n \varphi(x) = \sum_{k=0}^{n-1} \varphi(T^k x).$$

We will prove the following two lemmas later.

\begin{lemm}\label{clt}
The central limit theorem holds for $(T,\mathcal{L},\varphi)$, i.e., 
$$\frac{S_n \varphi}{\sigma\sqrt{n}}\overset{d}\longrightarrow \mathscr{N}(0,1)$$
for some fixed $\sigma>0$.
\end{lemm}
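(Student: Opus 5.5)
The plan is to obtain the central limit theorem by rewriting $\varphi$ as a coboundary-type expression and reducing to a known CLT for unbounded observables of the doubling map, as in Fernando--Schindler \cite{FS26}. The starting point is the factorization $\sin 2\pi x = 2\sin\pi x\cos\pi x$. It gives
\[
\varphi(Tx)=\log 4\sin^2 2\pi x=\varphi(x)+\log 4\cos^2\pi x ,
\]
and, with $\psi(x):=\log(4\cos^2\pi x)$, the identity $\varphi(x)=\varphi(Tx)-\psi(x)$. Telescoping then yields
\[
S_n\varphi(x)=\varphi(T^nx)-\varphi(x)+\ldots
\]
This particular telescoping is not quite what we want, since the remainder is again a Birkhoff sum, of $-\psi$. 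The useful observation is the classical one: $\varphi(x)=2\sum_{k\ge1}(-\cos 2\pi k x)/k$ has Fourier coefficients $\hat\varphi(k)=-1/|k|$. This is a mean-zero function in $L^p$ for every $p<\infty$, with logarithmic singularities at $x=0$ only (mod 1), and in particular $\int_0^1\varphi\,dx=0$.

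First I check integrability and moment conditions: $\varphi\in L^2$, and indeed $e^{s|\varphi|}\in L^1$ for $s<1/2$. Second, I compute the asymptotic variance. Because $\widehat{\varphi\circ T^k}$ is supported on multiples of $2^k$, the correlations are
\[
\int\varphi\cdot\varphi\circ T^k\,dx=\sum_{m\neq0}\hat\varphi(2^km)\overline{\hat\varphi(m)}=\sum_{m\ne0}\frac{1}{2^k m^2}=\frac{\pi^2}{3\cdot 2^{k}} .
\]
These correlations decay exponentially, so
\[
\sigma^2=\|\varphi\|_2^2+2\sum_{k\ge1}\frac{\pi^2}{3\cdot2^k}=\frac{\pi^2}{3}+\frac{2\pi^2}{3}=\pi^2>0.
\]
This settles nondegeneracy explicitly, and I expect $\sigma=\pi$.

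For the convergence in distribution itself, I will invoke \cite{FS26}, which gives a CLT for observables of the doubling map with logarithmic singularities (or more generally in a suitable weighted space). If a self-contained route is preferred, I would instead truncate. Let $\varphi_N$ be the Fourier partial sum (a trigonometric polynomial, hence Lipschitz), for which the standard spectral-gap CLT for expanding maps applies with variance $\sigma_N^2\to\sigma^2$. The remainder $r_N=\varphi-\varphi_N$ then needs to satisfy $\limsup_n \frac1n\|S_n r_N\|_2^2\to0$ as $N\to\infty$. By the same Fourier computation, $\|S_nr_N\|_2^2=\sum_{j,k<n}\langle r_N\circ T^j,r_N\circ T^k\rangle$, and each term is bounded by $2^{-|j-k|}\sum_{|m|>N}m^{-2}\lesssim 2^{-|j-k|}/N$. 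This gives $\frac1n\|S_nr_N\|_2^2\lesssim 1/N$ uniformly in $n$, and a standard approximation argument (Billingsley, Theorem 3.2) yields the CLT for $\varphi$.

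The main obstacle is the unboundedness of $\varphi$, which rules out directly applying the transfer-operator CLT on BV or Hölder functions. The Fourier-diagonal structure of the doubling map is what makes the $L^2$ truncation estimate above uniform in $n$. So the key step I would verify carefully is the correlation bound $|\langle r_N\circ T^j,r_N\circ T^k\rangle|\le 2^{-|j-k|}\sum_{|m|>N}m^{-2}$, which follows from the fact that only frequencies $m$ and $2^{|j-k|}m$ pair up.
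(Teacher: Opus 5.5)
Your proof is correct. It departs from the paper's in two places.

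\textbf{The paper's route.} The paper treats the convergence entirely as a black box. It quotes Fernando--Schindler's Theorem 2.5 and checks its growth hypotheses on $\varphi$ and $\varphi'$ (with $a$ small, $b=1$, $\vartheta=1$, $\eta_\pm=2$). Most of its effort goes into the non-coboundary hypothesis. It solves the cohomological equation in Fourier modes and gets $\hat\gamma(2^n l)\to 2/|l|$, which contradicts $\gamma\in L^2$. Positivity of $\sigma$ is then taken from the cited theorem.

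\textbf{Nondegeneracy.} You use the same two Fourier facts, $\hat\varphi(k)=-1/|k|$ and $e_m\circ T=e_{2m}$ with $e_m(x)=e^{2\pi i m x}$, but for a different purpose. You evaluate the Green--Kubo sum exactly and get $\sigma^2=\pi^2$, which settles nondegeneracy directly. It would also give the non-coboundary hypothesis at once, if you keep the Fernando--Schindler route: an $L^2$ coboundary has $\|S_n\varphi\|_2\le 2\|\gamma\|_2$, whereas here $\|S_n\varphi\|_2^2\sim \pi^2 n$. If you do take that route, you still have to verify its growth and exponent conditions, as the paper does.

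\textbf{Convergence.} Your truncation route removes the need for the cited theorem altogether.
\begin{enumerate}
\item The correlation bound $2^{-|j-k|}\sum_{|m|>N}m^{-2}$ holds because $|m|>N$ forces $|2^d m|>N$.
\item Hence $\frac1n\|S_n r_N\|_2^2\lesssim 1/N$ uniformly in $n$.
\item Chebyshev's inequality and Billingsley's approximation theorem then transfer the classical CLT for the trigonometric polynomials $\varphi_N$, whose variances satisfy $\sigma_N^2\to\pi^2$, to $\varphi$ itself.
\end{enumerate}

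\textbf{Trade-off.} Your approach is self-contained and elementary, produces the explicit value $\sigma=\pi$, and does not depend on a recent preprint and its technical hypotheses. What it gives up is generality: it relies on the exact diagonal action of the linear doubling map on Fourier modes. The cited theorem covers nonlinear full-branch maps and more general singular observables, but the paper does not need that generality here.
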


Given $n>1$, the dyadic intervals of length $2^{-n}$ are
\[
I_{n,j}=\bigl[j\,2^{-n},\,(j+1)2^{-n}\bigr),\qquad j=0,1,\dots,2^n-1.
\]

\begin{lemm}\label{discrete}
Let
$$Y_n=\{I_{n,j} : 0\le j<2^n \mbox{ and } S_n \varphi(t)\le-\log 80,\; \forall t\in I_{n,j} \}.$$
We have
$$\liminf_{n\to\infty}\frac{\sharp Y_n}{2^n}>\frac{1}{5}.$$
\end{lemm}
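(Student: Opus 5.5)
The plan is to derive Lemma \ref{discrete} from the central limit theorem of Lemma \ref{clt}, together with a uniform control of the oscillation of $S_n\varphi$ on most dyadic cylinders of level $n$. By Lemma \ref{clt},
$$\mathcal{L}\{t: S_n\varphi(t)\le -\log 80-M\}\to \Phi\Bigl(\frac{-\log 80-M}{\sigma\sqrt n}\Bigr)\cdot\text{(limit)}\to\tfrac12$$
as $n\to\infty$, for any fixed $M$; in fact even $\mathcal{L}\{S_n\varphi\le -C\sqrt n\}\to\Phi(-C/\sigma)$. So the measure of points where the Birkhoff sum is very negative is close to $1/2$. The remaining task is to pass from ``most points'' to ``whole dyadic intervals''. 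For this I would show that for each $t\in I_{n,j}$ the quantity $S_n\varphi(t)-S_n\varphi(s)$ is bounded for $t,s$ in the same cylinder, except on cylinders near the singularities of $\varphi$.

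\textbf{Oscillation estimate.} Write $\varphi(x)=\log 4\sin^2\pi x$. It is smooth away from $0$ (mod 1), with derivative $2\pi\cot\pi x$, so $|\varphi'(x)|\le 2/\|x\|$, where $\|x\|$ is the distance to $\mathbb{Z}$. For $t,s\in I_{n,j}$ and $0\le k<n$, the points $T^k t$ and $T^k s$ lie in a common dyadic interval of length $2^{k-n}$. Hence
$$|\varphi(T^kt)-\varphi(T^ks)|\le 2^{k-n}\cdot \sup_{I_{n-k,\cdot}} \frac{2}{\|x\|},$$
provided that interval does not touch $0$. The sum is geometric provided the distance of $T^kI_{n,j}$ from $0$ is at least comparable to some $2^{-(n-k)}\cdot L$. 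A simple sufficient condition is as follows: call $I_{n,j}$ \emph{good} if, for every $k<n$, the image $T^kI_{n,j}$ (a dyadic interval of level $n-k$) is not among the $L$ dyadic intervals of its level adjacent to $0$ mod 1. On a good cylinder, $\|T^kx\|\ge L2^{k-n}$ for every $x$ in the cylinder, so the oscillation is at most $\sum_k 2^{k-n}\cdot 2/(L2^{k-n})\cdot$. This is not summable, so I would refine the condition: require $T^kI_{n,j}$ to avoid the $L_m:=L\cdot 2^{m/2}$ intervals nearest $0$ at level $m=n-k$. The term is then $\lesssim 2^{-m/2}/L$, which is summable, giving total oscillation $\le C/L\le 1$. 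In symbolic terms, a cylinder fails to be good only if some suffix block of its binary word of length $m$ begins with $\gtrsim m/2-\log_2 L$ consecutive equal digits (all $0$ or all $1$). The proportion of such cylinders is at most $\sum_{m}2\cdot 2^{-(m/2-\log_2L)}$ summed over the positions where such a suffix could start. Counting carefully, this should be a fixed small constant, independent of $n$, for $L$ large. I expect this counting, and getting the constant to beat the margin $1/2-1/5$, to be the main obstacle. If the crude bound fails, I would instead use blocks of length $\approx 2\log_2 m + c$, so that bad words have proportion $\sum_m 2^{-c}m^{-2}\ll 1$, while the oscillation bound $\sum_m 2^{-m}\cdot 2^{m}/ (2^{c}m^2)$ stays summable and small.

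\textbf{Conclusion.} Fix $L$ (or $c$) so that the oscillation on good cylinders is at most $1$ and the bad proportion is below $0.1$. Let $A_n=\{t: S_n\varphi(t)\le -\log 80-1\}$; by Lemma \ref{clt}, $\mathcal L(A_n)\to 1/2$. A good cylinder meeting $A_n$ satisfies $S_n\varphi\le-\log 80$ throughout, so it lies in $Y_n$. Since $\varphi$ is finite a.e., the measure of $A_n$ covered by good cylinders is at least $\mathcal L(A_n)-0.1$, and each cylinder has measure $2^{-n}$. Therefore
$$\liminf_n \sharp Y_n/2^n\ge 1/2-0.1>1/5.$$
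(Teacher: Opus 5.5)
There is a genuine gap in your oscillation step. Your overall strategy (CLT plus an oscillation bound on most dyadic cylinders) is the paper's, but the oscillation step as you formulate it cannot work, and the obstruction is structural rather than a matter of constants. For every $j$, $T^{n-1}$ maps $I_{n,j}$ affinely onto $[0,\frac12)$ or $[\frac12,1)$. So $\varphi(T^{n-1}t)$ takes every value in $(-\infty,\log 4]$ on $I_{n,j}$, and since all other terms are at most $\log 4$, $S_n\varphi$ is unbounded below on \emph{every} level-$n$ cylinder. Thus no two-sided bound on $|S_n\varphi(t)-S_n\varphi(s)|$ holds on any cylinder, and your set of good cylinders is empty. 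At level $m=n-k=1$ both dyadic intervals touch $0$ mod $1$, and at each level $m$ the two intervals touching $0$ carry proportion $2^{1-m}$. The small levels therefore force a choice between two failures. Either essentially every cylinder is declared bad (your first count, whose terms with $m\le 2\log_2 L$ are $\ge 1$), or an infinite oscillation is tolerated (your second condition, which is vacuous when $2\log_2 m+c>m$ and so does not exclude the all-$0$ or all-$1$ suffixes). The same defect breaks your conclusion. A point $s\in A_n\cap I_{n,j}$ may lie in $A_n$ only because it is close to the endpoint of $I_{n,j}$ that some late iterate sends to $0$, and then it says nothing about the rest of the cylinder.

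The missing idea is that only an upper bound on $\sup_{I_{n,j}}S_n\varphi$ is needed, and since $\varphi\to-\infty$ at the singularity, closeness of late iterates to $0$ only helps. The paper sets $\Gamma_n=\{S_n\varphi\le-\log^2 n-\log 80\}$. It compares every $t\in I_{n,j}$ with a reference point $t_{n,j}\in\Gamma_n$ chosen in the middle part $\tilde I_{n,j}=[\frac{j+a}{2^n},\frac{j+1-a}{2^n})$, with $a=\frac1{10}$. Then $\|T^kt\|_{\mathbb Z}\le a^{-1}\|T^kt_{n,j}\|_{\mathbb Z}$ for every $k$, which yields the one-sided bound $\varphi(T^kt)-\varphi(T^kt_{n,j})\le 2\log(2/a)$. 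This bound is used for the last $M_n=\lceil 6\log_2 n\rceil$ terms. A two-sided estimate of your type is used only for the first $n-M_n$ terms, on cylinders with $\|T^k\tilde t_{n,j}\|_{\mathbb Z}\ge 2n^{-3}$, whose complement has proportion $O(n^{-2})$. The resulting $O(\log n)$ loss is absorbed by the $-\log^2 n$ in the CLT threshold, which still has limiting measure $\frac12$. A separate count uses that the strips $I_{n,j}\setminus\tilde I_{n,j}$ have total measure $2a$. It shows that at least $2^n/4$ cylinders satisfy $\Gamma_n\cap\tilde I_{n,j}\ne\emptyset$, giving $\liminf\ge\frac14$. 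In your setup, splitting at a fixed level $M$ instead of $M_n$ would also work, with a constant loss $2M\log(2/a)$ absorbed by the CLT. Without an interior reference point and a one-sided estimate for the late iterates, your argument does not go through.
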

\paragraph{\textbf{Remark.}} Notice that $S_n\varphi(x)=S_n\varphi(1-x)$. 
Thus $Y_n$ is symmetric in the sense that
 $I_{n,j}\in Y_n$ whenever $I_{n,2^n-1-j}\in Y_n$.

 By the two lemmas, we can find a large number of cut-off intervals with small deviation.
 
 \begin{cor}
\label{1924}
(1) There exists an integer $K_1>100$, such that for every $K>K_1$, 
\begin{equation}
\label{1/20}
\#\{I_{K,j}:\sup_{\theta\in I_{K,j}}R_K(\pi\theta)<1/20, 0\le j<2^K\}>2^K/6.
\end{equation}
(2) For $K>K_1$, there exists $\delta_3>0$, such that if $d(n,I_0)<\min\{\delta_1,\delta_3\}$
for a cut-off interval $I_0$ of level $n$, then there exists a symmetric family
\begin{equation*}
\mathcal{I}_{0,K}^{(2)}(I_0)\subset\{I\in\mathcal{I}_{0,K}^{(1)}(I_0):
d(n+K,I)<\frac{1}{10}d(n,I_0)\},
\end{equation*}
such that $\#\mathcal{I}_{0,K}^{(2)}(I_0)>2^K/6$.
\end{cor}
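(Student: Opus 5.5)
We prove (1) by rewriting $R_K(\pi\theta)$ as a Birkhoff sum and applying Lemma~\ref{discrete} one level down, and (2) by transferring the bound from the ideal Chebyshev orbit to the true orbit with the perturbation estimate \eqref{err}.

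\textbf{Part (1).} For $\theta\in[0,1)$ we have
$$R_K(\pi\theta)=\prod_{j=0}^{K-1}4\sin^2(2^{j-1}\pi\theta)=\exp\bigl(S_K\varphi(\theta/2)\bigr).$$
The $j=0$ factor is $e^{\varphi(\theta/2)}$, and the remaining factors are $e^{\varphi(T^{i}\theta)}$ for $0\le i\le K-2$. Hence
$$\log R_K(\pi\theta)=\varphi(\theta/2)+S_{K-1}\varphi(\theta)\le \log4+S_{K-1}\varphi(\theta).$$
If $\theta\in I_{K,j}$, then $\theta\in I_{K-1,\lfloor j/2\rfloor}$. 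So whenever $I_{K-1,i}\in Y_{K-1}$, both children $I_{K,2i}$ and $I_{K,2i+1}$ satisfy $\log R_K(\pi\theta)\le \log4-\log 80=-\log 20$ on them.

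The inequality $\varphi(\theta/2)\le\log4$ is strict except as $\theta\to1$. The supremum is therefore $<1/20$ on every such child, except possibly on $I_{K,2^K-1}$, which we discard. By Lemma~\ref{discrete}, for $K$ large this gives at least $2\cdot\#Y_{K-1}-1>2\cdot 2^{K-1}/5-1>2^K/6$ good intervals. Moreover, by the Remark after Lemma~\ref{discrete}, $Y_{K-1}$ is invariant under $i\mapsto 2^{K-1}-1-i$. Hence the set of children $\{I_{K,2i},I_{K,2i+1}\}$ is invariant under $j\mapsto 2^K-1-j$. We therefore discard $I_{K,0}$ together with $I_{K,2^K-1}$ to keep the family symmetric, and we also discard the two end indices excluded in Corollary~\ref{lemma3'}. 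This loses $O(1)$ intervals, which is harmless for $K>K_1$.

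\textbf{Part (2).} Let $I_k\in\mathcal I^{(1)}_{0,K}(I_0)$, where the index $k$ corresponds to the dyadic interval $I_{K,k-1}$. Since \eqref{itxy-xy} gives $y_{n+K}=\prod_{j=0}^{K-1}g(x_{n+j})\,y_n$, we get
$$d(n+K,I_k)\le d(n,I_0)\sup_{t\in I_k}\prod_{j=0}^{K-1}g(x_{n+j}(t)).$$
In the parametrization $x_n(\xi(\theta))=2\cos\theta$ of the proof of Lemma~\ref{lemma3}, the ideal product is $\prod_j g(f^j(2\cos\theta))=R_K(\theta)$. By \eqref{err} and $|x_{n+j}|\le 2+\varepsilon$, the true product differs from $R_K(\theta)$ by at most $K\,5^{K}\varepsilon$. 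By the proof of Lemma~\ref{lemma3}, $I_k=\xi(J)$ with $J\subset[(k-1)\pi/2^K-\delta_2,\,k\pi/2^K+\delta_2]$ and $\delta_2=\pi\sqrt\varepsilon$.

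$R_K$ is uniformly continuous, and there are only finitely many good intervals for fixed $K$, each with $\sup R_K<1/20$. Hence there is $\rho>0$ such that $R_K<1/20+1/40$ on the $\rho$-enlargement of every good $\pi I_{K,k-1}$.

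\textbf{Main obstacle.} The $\varepsilon=2^{-20K}$ fixed in Lemma~\ref{lemma3} need not make $\delta_2<\rho$ or $K5^K\varepsilon<1/40$. The point to handle is that the proof of Lemma~\ref{lemma3} works verbatim with any smaller $\varepsilon$. So we choose $\varepsilon'\le\varepsilon$ with $\pi\sqrt{\varepsilon'}<\rho$ and $K5^K\varepsilon'<1/40$, and let $\delta_3$ be the corresponding threshold given by \eqref{pertb}. Then $d(n,I_0)<\min\{\delta_1,\delta_3\}$ yields $\sup_{I_k}\prod g(x_{n+j})<1/20+1/40+1/40=1/10$, and hence $d(n+K,I_k)<\tfrac1{10}d(n,I_0)$ for every good index $k$.

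Finally, we define $\mathcal I^{(2)}_{0,K}(I_0)$ to be the $I_k$ whose index lies in the symmetric family from part (1). When $I_0$ is only a cut-off interval, Corollary~\ref{lemma3'} applies to the indices $2,\dots,2^K-1$, which we already kept. This family is symmetric under $k\mapsto 2^K+1-k$ and has more than $2^K/6$ members.
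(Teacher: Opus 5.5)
Your proof is correct and follows the paper's argument. Part (1) goes through $R_K(\pi\theta)=4\sin^2(\pi\theta/2)\exp(S_{K-1}\varphi(\theta))$ and Lemma~\ref{discrete} at level $K-1$; part (2) compares the true multiplier $\prod_j g(x_{n+j})$ with the ideal one $R_K$ (the paper uses continuity in $y$ of the polynomial $G(x,y)$ rather than \eqref{err}). Your uniform-continuity handling of the $\delta_2$-offset between $I_k$ and $\xi(\pi I_{K,k-1})$ is more careful than the paper, which simply asserts $I_j\subset\xi(\pi I_{K,j-1})$. Shrinking $\varepsilon$ is unnecessary, though: $|R_K'|\le 2^{3K}$ together with $\delta_2=\pi 2^{-10K}$ already makes the offset negligible, and $K5^K2^{-20K}<1/40$ holds automatically.
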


\begin{proof}
(1) Notice that 
\begin{equation*}
R_K(\pi\theta)=4\sin^2{(\pi\theta/2)}\mathrm{exp}(S_{K-1}\varphi(\theta))\le 4\mathrm{exp}(S_{K-1}\varphi(\theta)).
\end{equation*}
Then Lemma \ref{discrete} gives the proof.

(2) In the proof of Lemma \ref{lemma3},  
every $I_j\in \mathcal{I}_{0,K}^{(1)}(I_0)$ is a subset of $\xi(\pi I_{K,j-1})$.
     Furthermore, 
\begin{equation*}
    y_{n+K}= g(x_{n+K-1})\cdots g(x_{n+1})g(x_n)y_n.
    \end{equation*}
As in the proof of Lemma \ref{lemma3}, the function $g(x_{n+K-1})\cdots g(x_{n+1})g(x_n)$ is also
a polynomial in $x_n, y_n$, which we denote by $G(x_n,y_n)$. 
Thus there exists $\delta_3>0$ such that, for any $x\in [-2,2]$, 
for any $y$ with $|y|<\delta_{3}$, 
\begin{equation*}
|G(x,y)-G(x,0)|<1/20.
\end{equation*}
For $t\in I_0$, there is a $\theta\in[0,1]$ such that $x_n(t)=2\cos\pi\theta$.  By definition of $R_K$,
%Now take some $k\in\{0,2^K-1\}$ such that $P_k$ holds the condition \eqref{1/20}. We have
\begin{equation*}
G(x_n(t),0)=g(f^{K-1}(x_n(t)))\cdots g(f(x_n(t)))g(x_n(t))=R_{K}(\pi\theta).
\end{equation*}
If $R_K(\pi\theta)<1/20$, then
\begin{equation*}
|y_{n+K}(t)|=G(x_n(t),y_n(t))|y_n(t)|\le d(n,I_0)/10.
\end{equation*}
Define 
$$\mathcal{I}_{0,K}^{(2)}(I_0)=\{I_j\in\mathcal{I}_{0,K}^{(1)}(I_0) : 
I_{K,j-1}\in Y_n, j=1,\cdots, 2^K\}.$$
Hence, the fact that $\mathcal{I}_{0,K}^{(2)}(I_0)$ is symmetric and $\#\mathcal{I}_{0,K}^{(2)}(I_0)>2^K/6$ follows directly from remark of Lemma \ref{discrete} and  \eqref{1/20}.
\end{proof}

\subsection{Proof of Lemma \ref{clt}}
In \cite{FS26}, let $I=[0,1]$ and let $T: I\to I$ satisfy the full branch assumption 
(for instance the doubling map). 
Fernando and Schindler prove the following theorem.

\begin{thm}[\cite{FS26}, Theorem 2.5]\label{thm1}
Suppose $h$ is continuous and the right and left derivatives of $h$ exist on $I^o$. 
Suppose further that $h$ is not coboundary and there exist constants $a, b> 0$ such that
\begin{equation}\label{c1}
|h(x)| \lesssim x^{-a}(1-x)^{-a},\quad
\max\{|h'(x+)|, |h'(x-)|\} \lesssim x^{-b}(1-x)^{-b}.
\end{equation}
Assume
\begin{equation}\label{c2}
a < \min\{\vartheta, 1/b, 1/2\} \cdot \min\{1, \log \eta_- / \log \eta_+\},
\end{equation}
where $\vartheta, \eta_-, \eta_+$ are  determined by $T$. 
Then the following Central Limit Theorem holds:
$$\frac{S_n h - n \mathcal{L}(h)}{\sigma\sqrt{n}}\xrightarrow{d} \mathscr{N}(0,1).$$
\end{thm}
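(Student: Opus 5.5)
The statement is quoted from \cite{FS26}, and the paper only applies it. If I had to prove it myself, I would use Gordin's martingale approximation driven by the transfer operator of $T$. The main technical tool would be a weighted function space on which the transfer operator contracts despite the singularities of $h$ at $0$ and $1$.

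\textbf{Step 1: integrability.} Since $a<1/2$, condition \eqref{c1} gives $h\in L^2(\mathcal L)$. Replacing $h$ by $h-\mathcal L(h)$, I assume $\mathcal L(h)=0$.

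\textbf{Step 2: a weighted space.} Let $P$ be the transfer operator of $T$ with respect to $\mathcal L$. For the doubling map,
\[
Pu(x)=\tfrac12\bigl(u(x/2)+u((x+1)/2)\bigr).
\]
I introduce the space $\mathcal B_{a,b}$ of functions $u$ satisfying
\[
|u(x)|\lesssim x^{-a}(1-x)^{-a},\qquad |u'(x\pm)|\lesssim x^{-b}(1-x)^{-b}.
\]
\begin{itemize}
\item The inverse branches of $T$ map a neighbourhood of an endpoint into the interior, except for the branch fixing that endpoint.
\item On that fixing branch the weight is multiplied by a factor controlled by $\eta_\pm$, the contraction rates of the branches at the endpoints. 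This is why the ratio $\log\eta_-/\log\eta_+$ appears in \eqref{c2}.
\item The exponent $\vartheta$ would enter through a Hölder or distortion bound for the branches.
\end{itemize}
Using \eqref{c2}, I would prove a Lasota--Yorke type inequality for $P$ on $\mathcal B_{a,b}$. I would then interpolate with the $L^1$ norm, which is where the restriction $a<1/b$ is used, to obtain exponential decay
\[
\|P^n h\|_{L^2}\le C\rho^n,\qquad \rho<1.
\]

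\textbf{Step 3: truncation.} If $P$ does not behave well on the singular space directly, I would split
\[
h=h\mathbf 1_{[\epsilon,1-\epsilon]}+h\mathbf 1_{[0,\epsilon)\cup(1-\epsilon,1]}.
\]
The first piece is handled with the classical Lipschitz spectral gap. The second piece has $L^2$ norm of order $\epsilon^{1/2-a}$, and its images under $P^n$ are controlled using the fact that $P$ is an $L^2$ contraction. Choosing $\epsilon=\epsilon(n)$ to balance the two pieces still gives $\sum_n\|P^nh\|_{L^2}<\infty$.

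\textbf{Step 4: martingale approximation.} From Step 2 or Step 3, $\chi=\sum_{n\ge1}P^nh$ converges in $L^2$. Set $m=h+\chi-\chi\circ T$. Then $Pm=0$, so $\{m\circ T^k\}$ is a reverse martingale difference sequence. The martingale CLT applies with $\sigma^2=\|m\|_2^2$, and $S_nh-S_nm$ is a bounded-in-$L^2$ telescoping term, so the CLT transfers to $S_nh$. Finally, $\sigma=0$ would force $m=0$, i.e. $h=\chi\circ T-\chi$ would be an $L^2$ coboundary, which is excluded by hypothesis. Hence $\sigma>0$.

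\textbf{Main obstacle.} The hardest step is the Lasota--Yorke estimate near the fixed endpoints in Step 2. There the branch fixing $0$ (or $1$) maps singular mass back onto itself. One must check that the gain $\eta^{a}$ from the weight beats the loss coming from the derivative bound, and this is exactly the content of inequality \eqref{c2}. I would first verify it explicitly for the doubling map, where $\eta_\pm=2$ and $\vartheta=1$. In that case Step 3's truncation with $\epsilon=2^{-n/2}$ should already suffice, because the $L^1$ decay of $P^n$ on Lipschitz functions is exponential.
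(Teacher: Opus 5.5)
The paper does not prove this statement. It is imported from \cite{FS26}, and the paper's own work is only to check the hypotheses for $\varphi$ and the doubling map ($b=1$, $a$ arbitrarily small, $\eta_\pm=2$, $\vartheta=1$) and to show by Fourier series that $\varphi$ is not an $L^2$ coboundary. Your Gordin martingale argument is therefore a different route.

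For the doubling map, which is the only case the paper needs, your Steps 1, 3 and 4 give a complete proof after two repairs. First, $h\mathbf 1_{[\epsilon,1-\epsilon]}$ is not Lipschitz. Use instead its total variation (or a smooth cutoff), which by \eqref{c1} grows at most polynomially in $1/\epsilon$, together with the Riemann-sum bound $\sup_x|P^nu(x)-\int u|\le 2^{-n}\mathrm{Var}(u)$. Second, centre the two pieces separately, since only their sum has mean zero. With $\epsilon=2^{-\kappa n}$ and $\kappa$ small you get exponential decay of $\|P^nh\|_2$, and Step 4 then goes through as written. This uses only $a<1/2$ and never \eqref{c2}.

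As a proof of the theorem as stated, however, there is a genuine gap. Step 2 is where the general class of full-branch maps and hypothesis \eqref{c2} would have to be handled, and it is a plan rather than an argument. No Lasota--Yorke inequality on $\mathcal B_{a,b}$ is derived, and the roles you assign to $\vartheta$, $\eta_\pm$ and $a<1/b$ are guesses. For a general $T$ you would also need the transfer operator with respect to the invariant density, unless $T$ preserves Lebesgue measure. You would further need a spectral gap for that operator on a suitable space, and the ``classical Lipschitz spectral gap'' of Step 3 is not established for the class of maps in \cite{FS26}. So you have proved the special case, not \cite[Theorem 2.5]{FS26}.

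For the observable the paper actually uses, your approach becomes completely explicit. Since $\sin(\pi(x+1)/2)=\cos(\pi x/2)$,
\[
P\varphi(x)=\tfrac12\log\bigl(16\sin^2\tfrac{\pi x}{2}\cos^2\tfrac{\pi x}{2}\bigr)=\tfrac12\varphi(x).
\]
Hence $\chi=\sum_{n\ge1}P^n\varphi=\varphi$, and $m=2\varphi-\varphi\circ T=\log\tan^2(\pi x)$ satisfies $Pm=0$. It follows that $S_n\varphi=S_nm-\varphi+\varphi\circ T^n$. The reverse martingale CLT then gives Lemma~\ref{clt} with $\sigma^2=\|m\|_2^2=3\|\varphi\|_2^2=\pi^2$. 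This bypasses both \cite{FS26} and the non-coboundary computation.
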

Here and in what follows, we denote by $I^o$ the interior of interval $I$.
%% **步骤1：验证增长条件**

The function \(\varphi(x)\) is singular at \(x=0\) and \(x=1\).
Since $\sin(\pi x) \sim \pi x$ as $x\to0$ 
and $\sin(\pi x) \sim \pi (1-x)$ as \(x\to 1\), we have
$$\begin{array}{l}
\varphi(x) = \log(4\sin^2\pi x) \sim 2\log x + \text{constant}, \quad x \to 0^+\\
\varphi(x) = \log(4\sin^2\pi x) \sim 2\log(1-x) + \text{constant}, \quad x \to 1^-.
\end{array}$$
Hence, for any small \(a>0\), 
$$|\varphi(x)| \lesssim x^{-a}(1-x)^{-a}.$$

%% **步骤2：验证导数条件**
A direct computation gives 
$$\varphi'(x) = \frac{8\pi \sin(\pi x)\cos(\pi x)}{4\sin^2(\pi x)} = 2\pi \cot(\pi x).$$
We have
$$
\varphi'(x) \sim \frac{2}{x}, \quad x \to 0^+; \quad \varphi'(x) \sim -\frac{2}{1-x}, \quad x \to 1^-.
$$
Let \(b=1\). Then we have
$$\max\{|\varphi'(x+)|, |\varphi'(x-)|\} \lesssim x^{-b}(1-x)^{-b}.$$

Hence,  \eqref{c1} is satisfied.

$T$ can be written as $\psi_1:[0,1/2)\to I$, $\psi_1(x)=2x$, and $\psi_2:[1/2,1)\to I$, $\psi_2(x)=2x-1$.
By definition in section 2.2 of \cite{FS26}, $\eta_+=\eta_-=2$, $\vartheta=1$, 
where $\vartheta$ is the H\"{o}lder exponent of $\psi_j^{-1}$.
Since $a>0$ can be arbitrarily small, \eqref{c2} is satisfied.

%% **步骤3：验证非余循环条件**

We claim that \(\varphi(x)=\log(4\sin^2\pi x)\) is not $T$-coboundary. 
We prove by contradiction. 
Suppose $\varphi$ is $T$-coboundary, i.e., there is \(\gamma\in L^2(I)\) 
such that \(\varphi = \gamma \circ T - \gamma\). 

Since  $2|\sin\pi t|=|1-e^{2i\pi t}|$,
$$\log 2|\sin\pi t|=Re(\log (1-e^{2i\pi t}))=
Re(-\sum_{l=1}^\infty\frac{1}{l}e^{2i\pi lt})=
-\sum_{l=1}^\infty\frac{1}{l}\cos 2\pi lt,$$
the Fourier series of $\varphi$ is given by
$$\varphi(t)=-2\sum_{l=1}^\infty\frac{\cos 2\pi l t}{l}
=-\sum_{l\ne0}\frac{e^{2i\pi lt}}{|l|}.$$
We see that $\int_0^1\varphi(t)\mathrm{d}t=0$. 

Let $\{\hat{\varphi}(k)\}_{k\in\mathbb{Z}}$, 
$\{\hat{\gamma}(k)\}_{k\in\mathbb{Z}}$ be 
the Fourier coefficients of $\varphi$, $\gamma$.
A direct calculation yields $\hat{\varphi}(k)=-1/|k|$ for $k\ne0$, 
and by \(\varphi = \gamma \circ T - \gamma\),
$$\hat{\varphi}(k)=\int_0^1\varphi(x)e^{-2\pi i k x}\mathrm{d}x=\int_0^1\gamma(2x\mod1)e^{-2\pi ikx}dx-\hat{\gamma}(k).$$
By setting $u=2x$,
$$\int_0^{1/2}\gamma(2x\mod1)e^{-2\pi ikx}dx
=\frac{1}{2}\int_0^1\gamma(u)e^{-2\pi iku/2}du$$
and by setting $u=2x-1$, 
$$\int_{1/2}^1\gamma(2x\mod1)e^{-2\pi ikx}dx
=\frac{e^{-\pi ik}}{2}\int_0^1\gamma(u)e^{-2\pi iku/2}du.$$
Hence, for odd $k$, 
$$\hat{\gamma}(k)=\left(\frac{1}{2}-\frac{1}{2}\right)
\int_0^1\gamma(u)e^{-2\pi iku/2}du-\hat{\varphi}(k)=-\hat{\varphi}(k)=1/|k|,$$
and for even $k=2l$,  
$$\hat{\varphi}(2l)=\left(\frac{1}{2}+\frac{1}{2}\right)
\int_0^1\gamma(u)e^{-2\pi i l u}du-\hat{\gamma}(2l)
=\hat{\gamma}(l)-\hat{\gamma}(2l),$$ 
which yields
$$\hat{\gamma}(2l)=\hat{\gamma}(l)-\hat{\varphi}(2l).$$
For an odd $l$, we have
$$\begin{array}{l}
\hat{\gamma}(2l)=\frac{1}{|l|}+\frac{1}{2|l|}=\frac{2^2-1}{2^1|l|}\\
\hat{\gamma}(4l)=\hat{\gamma}(2l)-\hat{\varphi}(4l)
=\frac{2^2-1}{2^1|l|}+\frac{1}{2^2|l|}
=\frac{2^3-1}{2^2|l|},\cdots\\
\hat{\gamma}(2^n l)=\frac{2^{n+1}-1}{2^n|l|}\to \frac{2}{|l|} (n\to\infty).
\end{array}$$
This contradicts the fact that $\gamma\in L^2(I)$.
Therefore, $\varphi$ is not a $T$-coboundary. 
All the assumptions of Theorem \ref{thm1} are now satisfied. Since
\[
\mathcal{L}(\varphi)=\int_0^1 \varphi(t)dt=0,
\]
Theorem \ref{thm1} yields
\[
\frac{S_n\varphi}{\sigma\sqrt n}
\overset{d}{\longrightarrow} \mathscr{N}(0,1)
\]
for some $\sigma>0$. This completes the proof.

\subsection{Proof of Lemma \ref{discrete}}

For $n>1$, define 
$$\Gamma_n:=\{t\in I : S_n\varphi(t)\le-\log^2 n-\log 80\}.$$
By Lemma \ref{clt},
\begin{equation}\label{cltmeas}
\lim_{n\to\infty} \mathcal{L}(\Gamma_n)=\frac{1}{2}.\end{equation}

Take $a=1/10$. Take large $n$ so that $\mathcal{L}(\Gamma_n)>\frac{1}{2}-a=2/5$.

For $j=0,1,\cdots, 2^n-1$, define
$$\tilde{I}_{n,j}:=[\frac{j+a}{2^{n}},\frac{j+1-a}{2^{n}}),\quad 
\tilde{t}_{n,j}:=\frac{j+1/2}{2^n}.$$
Let 
$$\Lambda_n:=\{0\le j\le2^n-1 : \Gamma_n\cap\tilde{I}_{n,j}\ne\emptyset\}.$$
For each $j\in\Lambda_n$, take a $t_{n,j}\in \Gamma_n\cap\tilde{I}_{n,j}$.
We have $\sharp \Lambda_n\ge2^n/4$. 
Otherwise, $\#\{j:\Gamma_n\cap\tilde{I}_{n,j}=\emptyset\}>\frac342^n$, thus
$$\mathcal{L}\left(\bigcup\{\tilde{I}_{n,j}:j\not\in\Lambda_n \}\right)
>\frac{3}{4}2^n\frac{1-2a}{2^n}=3/5,$$
which is a contradiction to \eqref{cltmeas}.

Let
\[
M_n = \lceil 6\log_2 n\rceil ,\qquad \eta_n = n^{-3}.
\]
Divide $S_n \varphi$ into two parts:
\[
S_n \varphi(x)=A(x)+B(x),
\]
where
\[
A(x)=\sum_{k=0}^{n-M_n-1} \varphi(T^k x),\qquad
B(x)=\sum_{k=n-M_n}^{n-1} \varphi(T^k x).
\]
(The number of terms in $A$ is $n-M_n$, in $B$ is $M_n$.)

All $O(\cdot)$ estimates are uniform in $n$ for $n$ sufficiently large.

\smallskip
\paragraph{\textbf{The first part $A$}}

We say that
an interval $I_{n,j}$ is \emph{good} (for the first part) if
\[
\Vert{T^k \tilde{t}_{n,j}}\Vert_{\mathbb{Z}} \ge 2\eta_n,\quad\text{for all } 0\le k\le n-M_n-1.
\]
Denote
\[
\mathcal G_n = \{ I_{n,j}\mid I_{n,j} \text{ is good}\},\qquad
\mathcal B_n = \{ I_{n,j}\mid I_{n,j}\notin \mathcal G_n\}.
\]

\begin{prop}\label{prop:bad_measure}
We have
\[
\mathcal{L}\left(\bigcup_{I_{n,j}\in\mathcal B_n} I_{n,j}\right) = O(n^{-2}).
\]
\end{prop}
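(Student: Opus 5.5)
We will estimate the total length of the bad intervals by a union bound over the time index $k$ and by the invariance of Lebesgue measure under $T$. An interval $I_{n,j}$ is bad exactly when there is some $0\le k\le n-M_n-1$ with $\Vert T^k\tilde t_{n,j}\Vert_{\mathbb Z}<2\eta_n$. So
\[
\mathcal{L}\Bigl(\bigcup_{I_{n,j}\in\mathcal B_n} I_{n,j}\Bigr)
\le \sum_{k=0}^{n-M_n-1} 2^{-n}\,\#\{j:\Vert T^k\tilde t_{n,j}\Vert_{\mathbb Z}<2\eta_n\},
\]
and it suffices to show that each summand is $O(\eta_n)=O(n^{-3})$. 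Summing the $n-M_n\le n$ terms then gives $O(n^{-2})$.

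For fixed $k\le n-M_n-1$, the map $T^k$ is affine with slope $2^k$ on each $I_{n,j}$, since $I_{n,j}$ lies inside a dyadic interval of level $k$. Hence $T^k(I_{n,j})$ is a dyadic interval of length $2^{k-n}$, and $T^k\tilde t_{n,j}$ is its midpoint. As $j$ ranges over $0,\dots,2^n-1$, the midpoints $T^k\tilde t_{n,j}$ run over the $2^{n-k}$ points $(i+1/2)2^{k-n}$, each hit exactly $2^k$ times. The number of these midpoints lying within distance $2\eta_n$ of $\mathbb Z$ is at most $2\cdot(2\eta_n 2^{n-k}+1)$. Therefore
\[
2^{-n}\,\#\{j:\Vert T^k\tilde t_{n,j}\Vert_{\mathbb Z}<2\eta_n\}\le 2^{k-n}\bigl(4\eta_n2^{n-k}+2\bigr)=4\eta_n+2^{k-n+1}.
\]

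The only point needing care is the additive term $2^{k-n+1}$, which comes from the fact that near $0$ the grid of midpoints may be coarser than $\eta_n$. This is exactly why the first part $A$ stops at $k\le n-M_n-1$: then $2^{k-n+1}\le 2^{-M_n}\le n^{-6}$ by the choice $M_n=\lceil 6\log_2 n\rceil$. In fact, a midpoint $(i+1/2)2^{k-n}$ is at distance at least $2^{k-n-1}\ge 2^{-M_n-1}$ from $\mathbb Z$, which exceeds $2\eta_n=2n^{-3}$ only when $2^{-M_n-1}>2n^{-3}$; that fails here, so the additive term must be kept. Either way, each summand is at most $4n^{-3}+2n^{-6}$. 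Summing over at most $n$ values of $k$ gives a total of $O(n^{-2})$, uniformly for large $n$, which proves Proposition~\ref{prop:bad_measure}.
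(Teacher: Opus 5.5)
Your argument is correct and is essentially the paper's proof: a union bound over $k\le n-M_n-1$, the observation that the $2^n$ points $T^k\tilde t_{n,j}$ are the $2^{n-k}$ equally spaced midpoints each taken $2^k$ times, a per-$k$ bound of $4\eta_n+2^{k-n+1}\le 4n^{-3}+2^{-M_n}$, and a sum over at most $n$ values of $k$. The only slip is in your parenthetical aside, where the inequality $2^{k-n-1}\ge 2^{-M_n-1}$ goes the wrong way (for $k\le n-M_n-1$ one has $2^{k-n-1}\le 2^{-M_n-2}$), but nothing depends on it because you keep the additive term anyway.
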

\begin{proof}
For a fixed $k\le n-M_n-1$ consider the set
\[
E_k = \{ x\in[0,1) : \norm{T^k x}_{\mathbb{Z}} < 2\eta_n \}.
\]
Since $T$ preserves the Lebesgue measure $\mathcal{L}$,
\[
\mathcal{L}(E_k) = \mathcal{L}\{ y : \norm{y}_{\mathbb{Z}} < 2\eta_n \} = 4\eta_n = 4n^{-3}.
\]
If $I_{n,j}\in\mathcal B_n$, then there exists $k\le n-M_n-1$ such that
$\norm{T^k \tilde{t}_{n,j}}_{\mathbb{Z}}<2\eta_n$.
The $2^n$ values ${T}^k\tilde{t}_{n,j}$
 consist of $2^{n-k}$ equally spaced points, each with multiplicity $2^k$, 
and the distinct locations are spaced by $2^{k-n}$, 
hence they are uniformly distributed on $[0,1]$.
Then, the number of indices $j$ satisfying 
$\norm{T^k \tilde{t}_{n,j}}_{\mathbb{Z}}<2\eta_n$ is at most
$$2\left\lceil\frac{2\eta_n}{2^{k-n}}\right\rceil 2^k
<(2\eta_n+2^{k-n})2^{n+1}<(2\eta_n+2^{-M_n})2^{n+1}<6n^{-3}2^n.$$
We obtain
\[
\mathcal{L}\Bigl(\bigcup_{I_{n,j}\in\mathcal B_n} I_{n,j}\Bigr)
\le (n-M_n)6n^{-3}
\le 6n^{-2}.
\]
\end{proof}

\begin{prop}\label{prop:A_estimate}
Take any $j\in\Lambda_n$. If $I_{n,j}\in\mathcal G_n$, then for any $t\in I_{n,j}$, 
\[
|A(t)-A(t_{n,j})| = O(n^{-3}).
\]
\end{prop}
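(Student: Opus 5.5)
The plan is a mean value estimate applied term by term to the Birkhoff sum $A$. The key point is that, for $k\le n-M_n-1$, the image $T^kI_{n,j}$ is a short interval sitting well away from the singularities of $\varphi$ at the integers. The goodness of $I_{n,j}$ supplies exactly this separation, and the choice $M_n=\lceil 6\log_2 n\rceil$ makes the images short enough.

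\emph{Step 1: the images $T^kI_{n,j}$.} Since $I_{n,j}$ is a dyadic interval of level $n$ and $k<n$, the map $T^k$ restricted to $I_{n,j}$ is affine with slope $2^k$. It maps $I_{n,j}$ onto the dyadic interval $I_{n-k,m}$ for some $m$, with no wrapping modulo $1$. Hence $T^kI_{n,j}$ has length $2^{k-n}\le 2^{-M_n-1}\le \tfrac12 n^{-6}$. Moreover, $T^k\tilde t_{n,j}$ is its midpoint, and $|T^kt-T^kt_{n,j}|\le 2^{k-n}$ for all $t\in I_{n,j}$. Here we use that $t_{n,j}\in\tilde I_{n,j}\subset I_{n,j}$, which holds because $j\in\Lambda_n$.

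\emph{Step 2: distance from the singularities.} Goodness gives $\norm{T^k\tilde t_{n,j}}_{\mathbb Z}\ge 2\eta_n=2n^{-3}$. Every point $x$ of $T^kI_{n,j}$ lies within $2^{k-n}\le \tfrac12 n^{-6}$ of $T^k\tilde t_{n,j}$. Therefore $\norm{x}_{\mathbb Z}\ge 2n^{-3}-\tfrac12 n^{-6}\ge n^{-3}$ for all $x\in T^kI_{n,j}$.

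\emph{Step 3: derivative bound.} From $\varphi'(x)=2\pi\cot(\pi x)$ and $|\cot(\pi y)|\le 1/(\pi\norm{y}_{\mathbb Z})$, we get $|\varphi'(x)|\le 2/\norm{x}_{\mathbb Z}\le 2n^3$ on $T^kI_{n,j}$. Since $\varphi$ is smooth on this interval, the mean value theorem gives
\[
|\varphi(T^kt)-\varphi(T^kt_{n,j})|\le 2n^3\,2^{k-n}.
\]

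\emph{Step 4: summation.} Summing over $0\le k\le n-M_n-1$ gives
\[
|A(t)-A(t_{n,j})|\le 2n^3\sum_{k\le n-M_n-1}2^{k-n}\le 2n^3\,2^{-M_n}\le 2n^3\,n^{-6}=2n^{-3},
\]
which is the claimed bound.

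The only delicate point is Step 1, the no-wrap claim. One must check that $T^k$ is genuinely continuous and affine on all of $I_{n,j}$, so that the mean value theorem applies along a segment avoiding the integers. This follows because $T^kI_{n,j}$ is itself a dyadic interval, and Step 2 shows it stays at distance at least $n^{-3}$ from $\mathbb Z$.
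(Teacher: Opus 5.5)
Your proposal is correct and follows essentially the same route as the paper. Both arguments apply the mean value theorem term by term, use goodness to keep every point of $T^kI_{n,j}$ at distance at least $n^{-3}$ from $\mathbb{Z}$ (so that $|\varphi'|\le 2n^3$), and then sum $2n^3\sum_{k} 2^{k-n}\le 2n^3 2^{-M_n}\le 2n^{-3}$. Your observation that $T^k$ is affine on $I_{n,j}$ with a dyadic image makes explicit the paper's terse ``by arbitrariness of $t$'' justification that the intermediate point $u$ stays away from the integers.
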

\begin{proof}
Suppose $j\in\Lambda_n$ and $I_{n,j}\in\mathcal G_n$.
Take any $t\in I_{n,j}$. Set for $0\le k\le n-M_n-1$,
$$x_k = T^k t_{n,j},\quad y_k = T^k t,\quad z_k = T^k \tilde{t}_{n,j}.$$
We have
$$\dist(x_k,y_k), \dist(y_k,z_k) \le 2^{k-n} \le 2^{-M_n}\le n^{-6}.$$
By the goodness condition $\norm{z_k}_{\mathbb{Z}}\ge 2\eta_n = 2n^{-3}$, 
we have,
\begin{equation}\label{disty}
\norm{y_k}_{\mathbb{Z}}=\norm{T^kt}_{\mathbb{Z}}\ge n^{-3}.
\end{equation}

By the mean value theorem, there exists $u$ between $x_k$ and $y_k$ such that
$$|\varphi(y_k)-\varphi(x_k)| \le \dist(y_k,x_k) \cdot |\varphi'(u)|.$$
By arbitrry of $t$ and \eqref{disty}, we have $\norm{u}_{\mathbb{Z}}\ge n^{-3}$. 
Since the derivative of $\varphi$ away from integers is $\varphi'(x)=2\pi\cot(\pi x)$,
$$|\varphi'(u)| = 2\pi|\cot(\pi\norm{u}_{\mathbb{Z}})| 
\le \frac{2\pi}{\pi\norm{u}_{\mathbb{Z}}} \le 2n^3.$$
Then 
$$|\varphi(y_k)-\varphi(x_k)| \le 2^{k-n}\cdot 2n^3.$$
Summing over $k=0,\dots,n-M_n-1$,
\[
|A(t)-A(t_{n,j})|
\le 2n^3\sum_{k=0}^{n-M_n-1} 2^{k-n}
= 2n^3\,2^{-n}2^{n-M_n}
\le 2n^3\,2^{-M_n}\le 2n^{-3}.
\]
\end{proof}

\paragraph{The second part $B$}

\begin{prop}\label{prop:B_estimate}
For any $j\in\Lambda_n$ and any $t\in I_{n,j}$,
\[
\sum_{k=n-M_n}^{n-1} \left( \varphi(T^k t) - \varphi(T^k t_{n,j})\right) < C\log n
\]
for some constant $C>0$.
\end{prop}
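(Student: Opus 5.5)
The strategy is to split the difference termwise into its positive and negative parts and to control only the positive part, using that $\varphi$ is bounded above. Indeed $\varphi(x)=\log(4\sin^2\pi x)\le\log 4$ for every $x$. Hence for each $k$ with $n-M_n\le k\le n-1$,
\[
\varphi(T^k t)-\varphi(T^k t_{n,j})\le \log 4-\varphi(T^k t_{n,j}).
\]
It therefore suffices to bound $-\varphi(T^k t_{n,j})$ from above by $O(\log n)$ uniformly in $k$. Since $B$ has only $M_n=\lceil 6\log_2 n\rceil$ terms, a bound of order $1$ per term already gives $C\log n$. A per-term bound of order $\log n$ would give only $O(\log^2 n)$, so I aim for constant-size per-term bounds.

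The key observation is that $t_{n,j}\in\tilde I_{n,j}$, so $t_{n,j}$ sits at distance at least $a2^{-n}$ from both endpoints of $I_{n,j}$, where $a=1/10$. Write $t_{n,j}=(j+s)2^{-n}$ with $s\in[a,1-a)$. For $k=n-m$ with $1\le m\le M_n$ we have
\[
T^k t_{n,j}=\bigl((j \bmod 2^m)+s\bigr)2^{-m}.
\]
Hence
\[
\norm{T^k t_{n,j}}_{\mathbb{Z}}\ge a2^{-m},
\]
and the minimum is attained only when $j\bmod 2^m\in\{0,2^m-1\}$. In general $\norm{T^kt_{n,j}}_{\mathbb Z}\ge a2^{-m}$, so $-\varphi(T^kt_{n,j})\le -\log(4\cdot 4a^2 2^{-2m})=2m\log2+O(1)$. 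Summed over $m\le M_n$, this crude estimate gives $O(M_n^2)=O(\log^2 n)$, which is not enough.

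The main obstacle is therefore to refine the crude estimate. I would first try the following. Let $m_0$ be the largest $m\le M_n$ such that $j\bmod 2^{m}\in\{0,2^{m}-1\}$; equivalently, the last $m_0$ binary digits of $j$ are all $0$ or all $1$. For $m\le m_0$ the point $T^{n-m}t_{n,j}$ lies at distance about $s2^{-m}$ (or $(1-s)2^{-m}$) from an integer, contributing $2m\log 2+O(1)$. For $m>m_0$ the point lies at distance at least $2^{-m_0-1}$, roughly, from an integer, contributing $2m_0\log2+O(1)$. This still sums to something of order $m_0 M_n$, which could be $\log^2 n$ when $m_0\sim M_n$.

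To get around this, I would instead bound the difference directly rather than the negative part. The points $T^kt$ and $T^kt_{n,j}$ both lie in the same dyadic interval of length $2^{k-n}=2^{-m}$, and they are at comparable distances from the nearest integer. Writing $\norm{T^kt}_{\mathbb Z}\ge$ (distance of the interval to $\mathbb Z$) and $\norm{T^kt_{n,j}}\le$ (that distance)$+2^{-m}$, together with $\norm{T^kt_{n,j}}\ge a2^{-m}$, gives
\[
\frac{\norm{T^k t_{n,j}}_{\mathbb{Z}}}{\norm{T^k t}_{\mathbb{Z}}}\le \frac{1}{a}+1
\]
whenever the interval stays away from integers, and a similar comparison near integers using $\log\sin$. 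For the upper bound on the difference only this direction matters: $\varphi(T^kt)-\varphi(T^kt_{n,j})\le 2\log(\sin\pi\norm{T^kt}/\sin\pi\norm{T^kt_{n,j}})$. Since $\norm{T^kt}\le\norm{T^kt_{n,j}}+2^{-m}\le(1+1/a)\norm{T^kt_{n,j}}$, each term is at most $2\log(1+1/a)+O(1)=O(1)$. Summing the $M_n$ terms then gives the bound $C\log n$. The step I expect to need care is checking this ratio bound uniformly, including the case where the interval straddles $1/2$. There $\sin$ is nearly constant, so the ratio is trivially bounded.
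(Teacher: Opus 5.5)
Your final argument is correct and is essentially the paper's proof: both use $t_{n,j}\in\tilde I_{n,j}$ to get $\|T^k t_{n,j}\|_{\mathbb{Z}}\ge a2^{-m}$, compare this with $\|T^k t\|_{\mathbb{Z}}$ on the common dyadic interval of length $2^{-m}$ to bound each term of $B$ by a constant, and then sum over the $M_n=O(\log n)$ terms. Your single inequality $\|T^kt\|_{\mathbb{Z}}\le\|T^kt_{n,j}\|_{\mathbb{Z}}+2^{-m}\le(1+1/a)\|T^kt_{n,j}\|_{\mathbb{Z}}$ only replaces the paper's case split ($l=0$ versus $l>0$), and your worry about straddling $1/2$ does not arise, because for $m\ge1$ a dyadic interval of length $2^{-m}$ never contains $1/2$ in its interior.
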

\begin{proof}
Take any $j\in\Lambda_n$, $t\in I_{n,j}$ and $k=n-M_n,\cdots,n-1$.
Set $x_k = T^k t_{n,j}$, $y_k = T^k t$.
Since $T^k I_{n,j}=[j/2^{n-k},(j+1)/2^{n-k})\mod 1$,
the infimum of $\|t\|_{\mathbb{Z}}$ for $t\in T^k I_{n,j}$ is equal to
$$\min\{\|j/2^{n-k}\|_{\mathbb{Z}}, \|(j+1)/2^{n-k}\|_{\mathbb{Z}}\},$$
which is equal to $l/2^{n-k}$, for some $0\le l\le 2^{n-k-1}$.
If $l=0$, then $\|y_k\|_{\mathbb{Z}}\le 2^{k-n}$, and $\|x_k\|_{\mathbb{Z}}\ge a2^{k-n}$, 
we have
$$\frac{\|y_k\|_{\mathbb{Z}}}{\|x_k\|_{\mathbb{Z}}}\le a^{-1}=10.$$
If $l>0$, then $\|y_k\|_{\mathbb{Z}}\le (l+1)2^{k-n}$,
and $\|x_k\|_{\mathbb{Z}}\ge (l+a)2^{k-n}$, we have
$$\frac{\|y_k\|_{\mathbb{Z}}}{\|x_k\|_{\mathbb{Z}}}\le2.$$

Then we have
\[
\left|\frac{\sin\pi y_k}{\sin\pi x_k}\right|=
\frac{\sin\pi \|y_k\|_{\mathbb{Z}}}{\sin\pi \|x_k\|_{\mathbb{Z}}}\le 2/a.
\]

Consequently,
\[
\Delta_k:= \varphi(y_k) - \varphi(x_k) = 2\log\left|\frac{\sin\pi y_k}{\sin\pi x_k}\right|
\le 2\log 2/a.
\]

The sum over the $M_n$ terms is at most
\[
\sum_{k=n-M_n}^{n-1} \Delta_k \le M_n 2\log 2/a = O(\log n).
\]
\end{proof}

We continue the proof of Lemma \ref{discrete}.
Take $j\in \Lambda_n$
such that
$I_{n,j}\in \mathcal G_n.$
By the definition of \(\Lambda_n\), there exists
\[
t_{n,j}\in \Gamma_n\cap \widetilde I_{n,j},
\]
and hence
\[
S_n\varphi(t_{n,j})
\leq
-\log^2 n-\log 80.
\]

For any \(t\in I_{n,j}\) and $n$ sufficiently large, Propositions \ref{prop:A_estimate} and \ref{prop:B_estimate} give
\[
S_n\varphi(t)\le S_n\varphi(t_{n,j})+
O(n^{-3})+O(\log n)\le -\log80.
\] Therefore,
$I_{n,j}\in Y_n.$

Since $\#\Lambda_n\geq \frac{2^n}{4}$, by Proposition \ref{prop:bad_measure}
\[\liminf_{n\to\infty} \frac{\#Y_n}{2^n}\geq
\liminf_{n\to\infty}\frac{\#\Lambda_n-n^{-1}\,2^n}{2^n}
\ge \frac14. \]
This completes the proof of Lemma \ref{discrete}.%\hfill $\square$

\section{cut-off intervals with length control}

We summarize the proof of Lemma 1 in \cite{Last94} 
in the following form for our purpose.

\begin{lemm}
For $n\ge 2$, let $x_n(t)$ be a trace polynomial, 
and $I_n$ be a cut-off interval of level $n$ for some fixed $K>0$.
Then there exist constants $C_3,C_4$ independent of $n, x_n$ and $K$, such that
\begin{equation*}
  \frac{C_3}{|x_{n}'(t_0)|}<|I_n|<\frac{C_4}{|x_{n}'(t_0)|},
\end{equation*}
where $t_0$ is the root of $x_n$ in $I_n$.
\end{lemm}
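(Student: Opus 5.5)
The plan is to reduce the statement to the model case of a full band and then transfer to cut-off intervals. The transfer is possible because every cut-off interval $I_n$ is squeezed between two pieces that are comparable to the full band $B\supset I_n$ of level $n$.

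First, record two structural facts about $x_n$.
\begin{enumerate}
\item For every $c\in(-2,2)$ the polynomial $x_n-c$ has $2^n$ real simple roots, exactly one in each band of level $n$, because $\deg x_n=2^n$ and $x_n$ is a monotone bijection from each band onto $[-2,2]$.
\item By Rolle's theorem, the $2^n-1$ real zeros of $x_n'$ lie in the closures of the gaps, so $x_n'$ has no zero in the interior of $B$.
\end{enumerate}
Writing $x_n'(t)=c\prod_i(t-r_i)$ with every $r_i\notin B^o$, we get
\[
(\log|x_n'|)''=-\sum_i (t-r_i)^{-2}<0
\]
on $B^o$. So $\log|x_n'|$ is concave on $B$. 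The same applies to $\log|x_n-c|$ on either side of its unique root in $B$.

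Next, pass to the angular variable. Let $\theta(t)=\arccos(x_n(t)/2)$ on $B$, so that
\[
|x_n'(t)|=\sqrt{4-x_n(t)^2}\,|\theta'(t)|.
\]
Since $x_n(t)-2\cos\alpha$ has one real root in each band, the standard partial-fraction (Herglotz-type) identity
\[
\frac{x_n'}{x_n-2\cos\alpha}=\sum_i\frac{1}{t-t_i(\alpha)}
\]
should give that $\theta'$ is comparable to $\theta'(t_0)$ on the middle portion $x_n^{-1}([-\sqrt2,\sqrt2])\cap B$. Concretely, I would aim to show
\[
C^{-1}|x_n'(t_0)|\le |x_n'(t)|\le C|x_n'(t_0)|
\]
there, with $C$ absolute. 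I would do this by combining the concavity of $\log|x_n'|$ (which makes $|x_n'|$ unimodal on $B$) with the fact that $x_n$ covers the fixed range $[-\sqrt2,\sqrt2]$ on that portion.

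The lower bound on $|I_n|$ then follows. Because $K\ge1$, the definition of a cut-off interval gives
\[
x_n(I_n)\supset[-2,2\cos(\pi/2^{K+1})]\supset[-\sqrt2,\sqrt2].
\]
Hence $I_n$ contains the middle portion. By the derivative bound and the mean value theorem, that portion has length at least $2\sqrt2/(C|x_n'(t_0)|)$, and this is independent of $K$.

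For the upper bound it suffices to bound $|B|$, since $I_n\subset B$. On the middle portion $|x_n'|\ge C^{-1}|x_n'(t_0)|$. On the two end pieces, where $|x_n|\in[\sqrt2,2]$, I would use the concavity of $\log|x_n\mp 2|$, or equivalently convexity of $\theta$ near a band edge (here $x_n\mp2$ vanishes at the edge and has its other real roots outside $B$). The goal is to show each end piece has length at most a constant times the length of the middle piece: the function $x_n$ cannot flatten out near a band edge faster than its concave-logarithm profile allows.

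I expect this end-piece control, uniform in $n$, to be the main obstacle. My first attempt would be Last's argument: compare $x_n\pm2$ with its tangent data at the boundary of the middle piece, using log-concavity of $|x_n\mp2|$. This would bound the end-piece length by a universal multiple of $1/|x_n'|$ evaluated at the boundary of the middle piece, which is already comparable to $1/|x_n'(t_0)|$ by the middle-portion estimate.
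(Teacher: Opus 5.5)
\textbf{Verdict: genuine gap in the upper bound.} Your lower bound is sound in outline. For $K\ge1$ the cut-off interval contains $M=B\cap x_n^{-1}([-\sqrt2,\sqrt2])$, and the comparability $|x_n'|\asymp|x_n'(t_0)|$ on $M$ that you only sketch can be completed from log-concavity of $|x_n'|$ on $B$ together with $\int_a^{t_0}|x_n'|=\int_{t_0}^b|x_n'|=2$. There is also a more direct route: the tangent bound at $t_0$ for the concave function $\log|x_n+2|$ gives $|x_n(t)+2|\le 2e^{|x_n'(t_0)||t-t_0|/2}$ on the relevant side of $t_0$. So $x_n$ needs length at least $2\log(1+1/\sqrt2)/|x_n'(t_0)|$ to climb from $0$ to $\sqrt2$.

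The gap is the upper bound, at exactly the step you flag, and the mechanism you propose cannot close it. Concavity of $\psi=\log|x_n\mp2|$ only gives $\psi(t)\le\psi(y)+\psi'(y)(t-y)$. This yields bounds such as $2-x_n(t)\le(2-\sqrt2)e^{-\mu(t-y)}$: exponential decay, which never forces $x_n$ to reach $\pm2$ within a bounded distance. Tangent bounds for a concave logarithm always give lower bounds on lengths, never upper bounds.

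A concrete obstruction: $(2-\sqrt2)\bigl(1-\frac{t-y}{L}\bigr)e^{-\mu(t-y)}$ is log-concave and matches the value $2-\sqrt2$ and any prescribed negative slope at $y$ (choose $\mu$). It first vanishes at $y+L$, with $L$ as large as you like. Using it in place of $2-x_n$ also keeps $\log|x_n+2|$ and $\log|x_n'|$ concave. So no argument built only from log-concavity of $x_n\mp2$ and $x_n'$ plus data inside $B$ can bound the end pieces. Your heuristic that $x_n$ "cannot flatten out" runs the wrong way: log-concavity forbids $2-x_n$ from being too large, not from persisting too long.

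The missing ingredient, which is the mechanism behind Last's Lemma~1 that the paper simply cites, is information from beyond the band edge. Let $t^*$ be the unique zero of $x_n'$ between $t_0$ and the next zero $t_0^+$ of $x_n$. It lies in the closed gap adjacent to $B$, so $b\le t^*$ and $|x_n(t^*)|\ge2$. Write $x_n(t)=(t-t_0)Q(t)$.

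\begin{enumerate}
\item $\log|Q|$ is concave on $(t_0^-,t_0^+)$, since all other zeros of $x_n$ lie outside this interval.
\item Since $(\log|x_n|)'(t^*)=0$, we get $(\log|Q|)'(t^*)=-1/(t^*-t_0)$.
\item Since $(\log|Q|)'$ is decreasing, $\log|Q(t)|\ge\log|x_n'(t_0)|-1$ on $[t_0,t^*]$. That is,
\[
|x_n(t)|\ge \frac{|x_n'(t_0)|\,(t-t_0)}{e},\qquad t_0\le t\le t^*.
\]
\item Evaluating at $t=b$ gives $b-t_0\le 2e/|x_n'(t_0)|$.
\end{enumerate}

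If $B$ is the extreme band on that side, then $(\log|Q|)'>0$ there and the bound is immediate. The symmetric argument on the left yields $|I_n|\le|B|\le 4e/|x_n'(t_0)|$. The middle/end decomposition is therefore not needed for the upper bound at all.
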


\begin{lemm}
    \label{length}
Fix $K>K_1$. If $I_0$ is a cut-off interval of level $n$ 
with $d(n,I_0)<\min\{\delta_1,\delta_3\}$,
then there exists a collection of intervals
$\mathcal{I}_{0,K}^{(3)}(I_0)\subset\mathcal{I}_{0,K}^{(2)}(I_0)$,
with $\#\mathcal{I}_{0,K}^{(3)}(I_0)>2^{K}/13$, 
and for each $I\in \mathcal{I}_{0,K}^{(3)}(I_0)$,
\begin{equation}
|I|>C|I_0|2^{-K},
\end{equation}
where the constant $C$ is an absolute constant independent of $K$.
\end{lemm}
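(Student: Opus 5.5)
Starting from the family $\mathcal{I}_{0,K}^{(2)}(I_0)$ of Corollary \ref{1924}(2), which is symmetric and has more than $2^K/6$ members, I will keep only those children $I$ with $|I|>C|I_0|2^{-K}$. The count $2^K/13$ is slightly less than half of $2^K/6$, so it is enough to show that at most about half of the symmetric family can be short. Lengths will be compared through derivatives by the Last-type lemma above. If $t_0\in I_0$ is the root of $x_n$ and $t_I\in I$ is the root of $x_{n+K}$, then $|I_0|\asymp 1/|x_n'(t_0)|$ and $|I|\asymp 1/|x_{n+K}'(t_I)|$, with constants independent of $n$ and $K$. So it suffices to show that $|x_{n+K}'(t_I)|\lesssim 2^K|x_n'(t_0)|$ for enough children.

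First I will estimate $x_{n+K}'$ on $I$ by the chain rule. Differentiating \eqref{itxy-xy} gives
\[
x_{m+1}'=2x_mx_m'+y_{m+1}', \qquad y_{m+1}'=g(x_m)y_m'-x_m'y_m .
\]
Since $d(n,I_0)$ is tiny, I will use the $\xi$-parametrization $x_n(\xi(\theta))=2\cos\theta$ and compare with the ideal model. In that model
\[
\frac{d}{d\theta}\,2\cos(2^K\theta)=-2^{K+1}\sin(2^K\theta),
\]
which has modulus at most $2^{K+1}$. The perturbation terms involving $y$ and $y'$ are polynomials in $(x_n,y_n)$ and their derivatives, and they are small once $d(n,I_0)<\delta_1$. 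The difficulty is that $y_n'$ is not controlled directly by $d(n,I_0)$. I plan to treat $x_{n+K}$ as $h_K(x_n,y_n)$ and pass to the $\theta$ variable. On the child $I=\xi([\theta^+,\theta^-])$, the map $\theta\mapsto x_{n+K}(\xi(\theta))$ covers at least $[-2,2\cos(\pi/2^{K+1})]$ on a $\theta$-interval of length about $\pi 2^{-K}$. The perturbation estimate \eqref{err} then shows that the endpoints $\theta^\pm$ lie within $\delta_2\ll 2^{-K}$ of the dyadic points. Hence the $\theta$-length of $I$ is at least $c2^{-K}$.

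Next I will convert $\theta$-length into $t$-length. We have $|I|=\int_{\theta^+}^{\theta^-}|\xi'(\theta)|\,d\theta$ and $\xi'(\theta)=-2\sin\theta/x_n'(\xi(\theta))$. So I need $|x_n'|$ on $I$ to be comparable to $|x_n'(t_0)|\asymp 1/|I_0|$, together with $\sin\theta$ bounded below. Both hold for children whose $\theta$-interval lies in a fixed middle range $[\pi/4,3\pi/4]$, where $\sin\theta\ge 1/\sqrt2$. For $|x_n'|$ I will use the distortion inside a band coming from the Last lemma's proof (Herglotz/convexity of $\log|x_n'|$ on bands), which gives $|x_n'(t)|\lesssim |x_n'(t_0)|$ on the middle portion $\{|x_n|\le\sqrt2\}$. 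On such children this yields $|I|\ge C|I_0|2^{-K}$.

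The main obstacle is the counting: I must make sure that more than $2^K/13$ members of $\mathcal{I}^{(2)}_{0,K}(I_0)$ have $\theta$-intervals in the middle range. This is where the symmetry $j\to 2^K+1-j$ enters. The map $\theta\mapsto\pi-\theta$ leaves $\varphi$ and the family invariant, but it does not by itself force mass into the middle. So I expect to need the upper bound on $|x_n'|$ near the band edges in a weaker form, something like $|x_n'(t)|\lesssim |x_n'(t_0)|/\sin\theta$ (Chebyshev-type behaviour). That form makes $|\xi'(\theta)|\gtrsim |I_0|\sin^2\theta$. With it, children with $\sin\theta$ bounded below by an absolute constant can be kept. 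Choosing the excluded edge region to have proportion less than $1/6-1/13$ on each side, and using symmetry to pair the two sides, leaves more than $2^K/13$ children. I would check this edge estimate first, since everything else is routine perturbation.
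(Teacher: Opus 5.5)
\textbf{There is a genuine gap at the crux of your argument.} Your reduction is sound. By the proof of Lemma~\ref{lemma3}, each child contains $\xi([\frac{(k-1)\pi}{2^K}+\delta_2,\frac{k\pi}{2^K}-\delta_2])$. Since $x_n'(\xi(\theta))\xi'(\theta)=-2\sin\theta$, the lemma follows from $|I_0|<C_4/|x_n'(t_0)|$ once you know $|x_n'(\xi(\theta))|\lesssim|x_n'(t_0)|$ on the kept children, with $\sin\theta$ bounded below there. But that distortion bound is the whole difficulty, and you leave it unproved. The mechanism you cite also does not give it. Since $x_n'$ has only real zeros $c_k$, one in each closed gap, $\log|x_n'|=\mathrm{const}+\sum_k\log|t-c_k|$ is \emph{concave}, not convex, on each monotone branch. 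Concavity only says that $|x_n'|$ is unimodal there, with its maximum at the unique inflection point $\bar t$. This gives $|x_n'(t)|\le|x_n'(t_0)|$ on the side of $t_0=\xi(\pi/2)$ away from $\bar t$, and no comparison at all on the other side. Last's lemma does not help either, since it only concerns the single point $t_0$. So neither your middle-range bound nor the edge bound $|x_n'|\lesssim|x_n'(t_0)|/\sin\theta$ is established, and the proposal rests on an estimate you yourself flag as unchecked.

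The idea you are missing is that the one-sided bound is enough, and this is exactly how the paper argues. Keep only the children in the favourable half, i.e.\ $\theta\in[\pi/2,199\pi/200]$ or $\theta\in[\pi/200,\pi/2]$, according to the side of $t_0$ on which $\bar t$ lies. The role of the symmetry $j\mapsto 2^K+1-j$ of $\mathcal{I}^{(2)}_{0,K}(I_0)$ is not to push mass into the middle. It guarantees that exactly half of its more than $2^K/6$ members lie in the favourable half. Discarding the roughly $2^K/200+1$ children within $\pi/200$ of the band edge then leaves more than $2^K/13$. On these children $|x_n'|\le|x_n'(t_0)|$ and $\sin\theta\ge\sin(\pi/200)$. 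From there, either your $\theta$-length computation or the paper's estimate finishes the proof. The paper integrates $(f^K\circ x_n)'$ between the points where $x_{n+K}=\pm1$, using \eqref{err} and $|(f^K)'(2\cos\theta)|=2^K|\sin 2^K\theta|/\sin\theta<100\cdot 2^K$, to get $|I|\ge C|I_0|2^{-K}$. Your two-sided bound is in fact true: one can get $\sup|x_n'|\le 4\pi|x_n'(t_0)|$ on the whole band by Koebe distortion for the inverse of the comb map $\theta=\arccos(x_n/2)$. That inverse extends by reflection to a univalent map on the disc of radius $\pi/2$ about the midpoint of the band's base segment. But this is a substantial extra ingredient you would have to supply, and the paper's route avoids it.
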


\begin{proof}

Let $\varepsilon(=2^{-20K})$ and $\xi$ be as defined in the proof of Lemma \ref{lemma3}. 
We have $\xi(\pi/2)$ be the root of $x_n$ in $I_0$. 
$I_0$ is contained in a monotone branch interval $\tilde{I}$ for $x_n$.
There is a unique inflection point $\bar{t}$ of $x_n$ in $\tilde{I}$, 
that is, $x_n''(\bar{t})=0$.
Without loss of generality, assume that $x_n$ is monotone increasing on $I_0$.
The monotone decreasing case is analogous.

If $x_n(\bar{t})>x_n(\xi(\pi/2))=0$, we keep the intervals
$I\in\mathcal{I}_{0,K}^{(2)}(I_0)$ such that
$x_n(t)\in[2\cos{199\pi/200},0]$ for all $t\in I$, 
otherwise we keep the intervals $I\in\mathcal{I}_{0,K}^{(2)}(I_0)$ such that
$x_n(t)\in[0,2\cos{\pi/200}]$ for all $t\in I$. 
In both cases, for $t$ in selected intervals, 
$$|x_n'(t)|<|x_n'(\xi(\pi/2))|.$$ 

Denote the collection of intervals taken above by
 $\mathcal{I}_{0,K}^{(3)}(I_0)$.
 By Corollary \ref{1924}, $\mathcal{I}_{0,K}^{(2)}(I_0)$ is
symmetric. Hence, we have $\#\mathcal{I}_{0,K}^{(3)}(I_0)>2^K/13$.

Notice that by the chain rule,
\begin{equation*}
(f^n)'(x)=\prod_{i=0}^{n-1}f'(f^{i}(x))=\prod_{i=0}^{n-1}2(f^{i}(x)).
\end{equation*}
Substituting $x=2\cos{\theta}$ into the above equation, we obtain for $\pi/200<\theta<199\pi/200$,
\begin{equation*}
|(f^n)'(x)|=2^n\prod_{i=0}^{n-1}2|\cos{2^i\theta}|=\frac{2^n|\sin{2^n\theta}|}{\sin{\theta}}
<100\cdot 2^n,
\end{equation*}
since $1/\sin{\theta}<100$.
Take $I\in \mathcal{I}_{0,K}^{(3)}(I_0)$.
Let $t_1,t_2$ be two points in $I$ such that
$x_{n+K}(t_1)=-1,x_{n+K}(t_2)=1$.
Then
\begin{equation*}
    \begin{aligned}
   2&=x_{n+K}(t_2)-x_{n+K}(t_1)\\
   &\le f^{K}(x_n(t_2))-f^{K}(x_n(t_1))+2\varepsilon\\
   &=\int_{t_1}^{t_2}(f^{K}(x_n(t)))'\mathrm{d}t+2\varepsilon\\
   &\le 100\cdot 2^K\int_{t_1}^{t_2}|x_n'(t)|\mathrm{d}t+2\varepsilon\\
   &\le 100\cdot 2^K|x_n'(\xi(\pi/2))||t_2-t_1|+2\varepsilon\\
   &\le 100\cdot 2^K|x_n'(\xi(\pi/2))||I|+2\varepsilon,
    \end{aligned}
\end{equation*}
where the first inequality is due to \eqref{err}.
Since $\varepsilon\ll1/2$, we obtain that 
\begin{equation*}
    \begin{aligned}
  |I|\ge \frac{2-2\varepsilon}{100\cdot 2^K|x_n'(\xi(\pi/2))|}
  \ge \frac{|I_0|}{100C_4\cdot 2^K}.
    \end{aligned}
\end{equation*}
\end{proof}
\section{Separate nested structure and Hausdorff dimension}

We recall the definition of separated nested structure(SNS) in \cite{LQY22}.
A sequence of sets of intervals 
$\mathcal{I}=\{\mathcal{I}_n\}_{n\ge0}$ is called an SNS if

i) $\mathcal{I}$ is separating: $\forall n\ge0$, 
$I\ne J\in\mathcal{I}_n$, $I^o\cap J^o=\emptyset$; 

ii) $\mathcal{I}$ is nested: $\forall n\ge0$, $I\in \mathcal{I}_{n+1}$,
there is a unique $J\in\mathcal{I}_n$ such that $I\subset J$;

iii) $\mathcal{I}$ is minimal: $\forall n\ge0$, any $J\in \mathcal{I}_{n}$,
there exists $I\in\mathcal{I}_{n+1}$ such that $I\subset J$.

The limit set of $\mathcal{I}$ is defined as
$$E(\mathcal{I}):=\bigcap_{n\ge1}\bigcup_{I\in \mathcal{I}_n}I.$$

For $K>K_1$, take $\delta_1,\delta_3$ 
determined by Lemma \ref{lemma3} and Corollary \ref{1924}.
Note that $\delta_1,\delta_3$ depend on $K$.
We define an SNS $\mathcal{I}^{(K)}$ as follows.

By Lemma \ref{smalldevi}, there exists a band $I_0$ of level $N_0$ such that
$d(N_0,I_0)<\min\{\delta_1,\delta_3\}$.
We define $\mathcal{I}_0^{(K)}=\{I_0\}$.

By Lemma \ref{length},
there exists a set of cut-off intervals $\mathcal{I}_{0,K}^{(3)}(I_0)$,
such that $\#\mathcal{I}_{0,K}^{(3)}(I_0)>2^{K}/13$, 
and for each $I\in \mathcal{I}_{0,K}^{(3)}(I_0)$,
$I$ is a constructive interval of 
$$I_0\cap\sigma_{N_0+1}\cap\cdots\cap \sigma_{N_0+K},$$ 
and satisfies $|I|>C|I_0|2^{-K}$, $d(N_0+K,I)<d(N_0,I_0)$.

We take any subset of $\mathcal{I}_{0,K}^{(3)}(I_0)$ with $2^{K-4}$ cut-off intervals
and denote it by $\mathcal{I}_1^{(K)}$.

Suppose $\mathcal{I}_n^{(K)}$ is defined for some $n\ge1$.
We construct $\mathcal{I}_{n+1}^{(K)}$.
For any cut-off interval $J\in\mathcal{I}_n^{(K)}$,
 Lemma \ref{length} gives a set of cut-off intervals $\mathcal{I}_{0,K}^{(3)}(J)$,
such that $\#\mathcal{I}_{0,K}^{(3)}(J)>2^{K}/13$, 
and for each $I\in \mathcal{I}_{0,K}^{(3)}(J)$,
$I$ is a constructive interval of 
$$J\cap\sigma_{N_0+nK+1}\cap\cdots\cap \sigma_{N_0+(n+1)K},$$ 
and satisfies $|I|>C|J|2^{-K}$, $d(N_0+(n+1)K,I)<d(N_0,I_0)$.

We choose any  $2^{K-4}$ cut-off intervals from $\mathcal{I}_{0,K}^{(3)}(J)$ and put them
into $\mathcal{I}_{n+1}^{(K)}$.
Note that $\#\mathcal{I}_{n+1}^{(K)}=2^{(K-4)(n+1)}$.

By induction, we define the SNS $\mathcal{I}^{(K)}$.
It is clear that 
$$E(\mathcal{I}^{(K)})\subset 
\bigcap_{n\ge N_0}\sigma_n\subset
\bigcap_{n\ge N_0}(\sigma_{n}\cup\sigma_{n+1})=
\bigcap_{n\ge1}(\sigma_{n}\cup\sigma_{n+1})
=\sigma(H_{\mathrm{tm},\lambda}).$$

We recall  Proposition 5.7 in \cite{LQY22}:
\begin{prop}
\label{1944}
Let $\mathcal{I}=(\mathcal{I}_n)_{n\geq 1}$ be a SNS. 
Assume that it satisfies 

 (i) there exist $w\in(0,1)$ and $C>0$ such that, for any $n>0$ and $I\in\mathcal{I}_n$,
    $$
    |I|\geq Cw^n;
    $$
    
 (ii) there exists $C'\geq 1$ such that, for any $n,k> 0$ and $I,I'\in\mathcal{I}_n$,
    $$
    \frac{
    \#\{J\in\mathcal{I}_{n+k}:J\subset I\}
    }{
    \#\{J\in\mathcal{I}_{n+k}:J\subset I'\}
    }
    \leq C'.
    $$
Then
$$
\dim_H E(\mathcal{I})
\geq
\liminf_{n\to\infty}
\frac{\log \#\mathcal{I}_n}{-n\log w}.
$$
\end{prop}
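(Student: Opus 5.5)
The plan is to prove this by the standard mass distribution principle. The natural measure is the one that splits mass evenly among the children of each interval, and hypothesis (ii) is what makes this measure comparable across different branches. Set $N_n:=\#\mathcal{I}_n$. For $J\in\mathcal{I}_m$ and $n\ge m$, let $N_n(J):=\#\{I\in\mathcal{I}_n: I\subset J\}$. Define
$$\mu(J):=\lim_{n\to\infty}\frac{N_n(J)}{N_n}.$$
If the limit fails to exist, I would take a weak-$*$ limit of the uniform measures on the $\mathcal{I}_n$; each of these gives mass $1/N_n$ to every $I\in\mathcal{I}_n$. Hypothesis (ii) gives
$$N_n(J)\le C'N_n(J')\quad\text{for all } J,J'\in\mathcal{I}_m .$$
Summing over $J'$ shows $N_n(J)\le C'N_n/N_m$. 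The measure is therefore supported on $E(\mathcal{I})$ (by nestedness and minimality) and satisfies
$$\mu(J)\le \frac{C'}{N_m}\qquad\text{for every } J\in\mathcal{I}_m .$$

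The next step bounds the mass of a small ball. Fix $s$ below the liminf, so that $N_m\ge w^{-ms}$ for all large $m$. Let $B$ be an interval with $|B|=r$ small, and choose $m$ with $Cw^{m+1}\le r< Cw^{m}$. By (i), every element of $\mathcal{I}_m$ has length $\ge Cw^m>r$. By the separation property their interiors are disjoint, so $B$ meets at most $2$ (say $3$) of them. Hence
$$\mu(B)\le \frac{3C'}{N_m}\le 3C'\,w^{ms}\lesssim r^{s}.$$
The last step uses $w^m\le r/(Cw)$. The mass distribution principle then gives $\dim_H E\ge s$, and letting $s$ increase to the liminf finishes the proof.

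The main obstacle is constructing $\mu$ consistently, since the ratio $N_n(J)/N_n$ need not converge. I would avoid the issue by taking a weak-$*$ limit point $\mu$ of $\mu_n=N_n^{-1}\sum_{I\in\mathcal{I}_n}\delta_{c_I}$, where $c_I$ is a point of $I$. Closed intervals give the upper bound $\mu(J)\le\limsup_n\mu_n(J)\le C'/N_m$. Endpoints shared by neighbouring intervals only affect the constant, which I absorb by counting at most three intervals instead of two. The supports of the $\mu_n$ lie in $\bigcup\mathcal{I}_n$, which decreases in $n$, so $\mu$ is carried by $E(\mathcal{I})$.
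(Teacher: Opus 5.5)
The paper does not prove this proposition; it quotes it from \cite{LQY22}, so there is nothing to compare against and only your argument needs checking. Your overall strategy is the natural one and it works: take a weak-$*$ limit of the uniform counting measures, use the bound $N_n(J)\le C'N_n/N_m$ from (ii), observe from (i) and separation that a short interval meets at most two members of $\mathcal{I}_m$, and finish with the mass distribution principle. However, one step is justified backwards. For a weak-$*$ limit $\mu=\lim_i\mu_{n_i}$, the portmanteau theorem gives $\mu(F)\ge\limsup_i\mu_{n_i}(F)$ for closed $F$. That is what you need, and correctly use, to show that $\mu$ is carried by $E(\mathcal{I})$. Upper bounds, by contrast, come only from open sets: $\mu(U)\le\liminf_i\mu_{n_i}(U)$. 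So ``closed intervals give $\mu(J)\le\limsup_n\mu_n(J)$'' is false as a general principle; consider $\delta_{1/n}\to\delta_0$ with $J=[-1,0]$. In your setting, the descendants of a neighbour sharing an endpoint with $J$ could, a priori, push mass onto $J$ in the limit.

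The repair is short and in the spirit of your remark about endpoints: bound $\mu(B)$ through an open neighbourhood, without ever bounding $\mu(J)$.
\begin{itemize}
\item Let $B$ be an interval with $|B|=r$ and $Cw^{m+1}\le r<Cw^m$, and let $U\supset B$ be an open interval with $|U|<Cw^m$.
\item By your counting argument, at most two members $J_1,J_2$ of $\mathcal{I}_m$ meet $U$. A third one would lie between points of the other two, hence inside $\overline{U}$, contradicting (i).
\item Let $I\in\mathcal{I}_n$ with $n>m$ and $c_I\in U$. Its ancestor in $\mathcal{I}_m$ is unique, because lengths are positive and interiors are disjoint; this uniqueness is also what your identity $\sum_{J'}N_n(J')=N_n$ needs. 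That ancestor meets $U$, so it is $J_1$ or $J_2$.
\item Hence $\mu_n(U)\le (N_n(J_1)+N_n(J_2))/N_n\le 2C'/N_m$.
\item Therefore, for small $r$,
\[
\mu(B)\le\mu(U)\le\liminf_i\mu_{n_i}(U)\le \frac{2C'}{N_m}\le 2C'(Cw)^{-s}r^{s}.
\]
\end{itemize}
The mass distribution principle then finishes the proof exactly as you describe.
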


\begin{thm}
$\lim\limits_{K\to\infty}\mathrm{dim}_{H}(E(\mathcal{I}^{(K)}))=1.$
\end{thm}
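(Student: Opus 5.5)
The plan is to apply Proposition~\ref{1944} to the SNS $\mathcal{I}^{(K)}$ for each fixed $K>K_1$, and then let $K\to\infty$. The upper bound is trivial: $E(\mathcal{I}^{(K)})\subset\mathbb{R}$, so $\dim_H E(\mathcal{I}^{(K)})\le 1$. The whole task is therefore a lower bound of the form $1-O(1/K)$.

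\textbf{Verifying the SNS axioms.} Separation holds because the children of a given parent are distinct components $I_1,\dots,I_{2^K}$ of $J\cap\sigma_{N_0+nK+1}\cap\cdots\cap\sigma_{N_0+(n+1)K}$ from Lemma~\ref{lemma3} and Corollary~\ref{lemma3'}. Children of different parents lie in parents with disjoint interiors, so an induction on $n$ gives separation at every level. Nestedness and minimality are immediate, since each parent receives exactly $2^{K-4}$ children, which is nonempty.

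\textbf{The recursion must continue.} We also need every chosen child to again satisfy the hypothesis of Lemma~\ref{length}. Each child is a cut-off interval of level $N_0+(n+1)K$. By Corollary~\ref{1924}(2) its deviation is at most $\frac1{10}$ of its parent's, hence below $\min\{\delta_1,\delta_3\}$. It is also legitimate to pick $2^{K-4}$ children, since $2^K/13>2^{K-4}$.

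\textbf{Hypotheses (i) and (ii) of Proposition~\ref{1944}.} Condition (ii) holds with $C'=1$, because every $I\in\mathcal{I}_n^{(K)}$ contains exactly $2^{(K-4)k}$ intervals of $\mathcal{I}_{n+k}^{(K)}$. For condition (i), iterate Lemma~\ref{length}, whose constant $C$ is absolute and independent of $K$:
\[
|I|\ge |I_0|\,(C2^{-K})^n \quad\text{for } I\in\mathcal{I}_n^{(K)}.
\]
So (i) holds with $w=C2^{-K}$, which lies in $(0,1)$ after shrinking $C<1$ if necessary, and with constant $|I_0|$. Proposition~\ref{1944} then gives
\[
\dim_H E(\mathcal{I}^{(K)})\ge \liminf_{n\to\infty}\frac{n(K-4)\log 2}{n(K\log 2-\log C)}=\frac{(K-4)\log2}{K\log2+\log(1/C)}.
\]

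\textbf{Letting $K\to\infty$.} The right-hand side tends to $1$, and together with the trivial upper bound this yields the limit. The main point needing care is the uniformity of $C$ in $K$ in Lemma~\ref{length}: the thresholds $\delta_1,\delta_3$ and the level $N_0$ depend on $K$, but they do not enter the ratio of the $n$-th roots, so the bound above is unaffected. Since $E(\mathcal{I}^{(K)})\subset\sigma(H_{\mathrm{tm},\lambda})$, this also gives Theorem~\ref{thm:main}.
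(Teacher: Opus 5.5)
Your proposal is correct and follows essentially the same route as the paper. Condition (ii) of Proposition~\ref{1944} holds with ratio $1$, condition (i) holds with $w=C2^{-K}$ using the $K$-independent constant of Lemma~\ref{length}, and the resulting bound $\frac{(K-4)\log 2}{K\log 2-\log C}$ tends to $1$; your checks that the recursion continues (the deviation drops by a factor $1/10$) and the trivial upper bound $\dim_H E(\mathcal{I}^{(K)})\le 1$ just spell out points the paper leaves implicit.
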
    

\begin{proof}
  First we verify the condition of Proposition \ref{1944} for 
$\mathcal{I}^{(K)}$.
By our construction, for any $n,k\geq 0$ and $I,I'\in\mathcal{I}_n^{(K)}$,
 $$
    \frac{
    \#\{J\in\mathcal{I}_{n+k}^{(K)}:J\subset I\}
    }{
    \#\{J\in\mathcal{I}_{n+k}^{(K)}:J\subset I'\}
    }
    =1.
    $$
Increasing $K_1$ if necessary, assume $C2^{-K}<1$ for every $K>K_1$. Take $\omega=C2^{-K}$. We have, for $I\in\mathcal{I}_n^{(K)}$,
\begin{equation*}
|I|>|I_0|\omega^{n}.
\end{equation*}
Thus Proposition \ref{1944} gives
\begin{equation*}
    \mathrm{dim}_H(E(\mathcal{I}^{(K)}))\ge
    \liminf_{n\to\infty}\frac{\log\# \mathcal{I}_{n}^{(K)}}
    {-n\log \omega}
    =\frac{(K-4)\log2}{K\log2-\log{C}}.
    \end{equation*}
Then the limit is $1$ as $K\to\infty$.
\end{proof}

\begin{proof}[Proof of Theorem \ref{thm:main}]
Since for any $K>K_1$, $E(\mathcal{I}^{(K)})$ is a subset of the spectrum 
$\sigma(H_{\mathrm{tm},\lambda})$,
we have $\mathrm{dim}_{H}(\sigma(H_{\mathrm{tm},\lambda}))=1$ for any $\lambda\ne0$.
\end{proof}

\smallskip
\noindent\textbf{Acknowledgements}
We thank Professor Wen Zhiying for helpful discussions. 
This work is dedicated to him on his 80th birthday.
We thank GPT-5.5-Plus for its suggestions on the application of the central limit theorem.

\end{document}